\documentclass[12pt]{amsart}

\usepackage[top=1in, bottom=1in, left=1in, right=1in]{geometry}
\usepackage{times}
\usepackage{fancyhdr}

\usepackage{amssymb,amsmath,amsthm,mathtools,bbm}
\usepackage{enumitem}

\setlist[1]{itemsep=0.5em, topsep=0.5em}

\usepackage{dsfont}
\usepackage{mathrsfs}

\usepackage{color}
\definecolor{red}{rgb}{1,0,0}
\definecolor{orange}{rgb}{0.7,0.3,0}
\definecolor{blue}{rgb}{0,.3,.7}
\definecolor{green}{rgb}{0,.6,.4}
\definecolor{dblue}{rgb}{0,.3,.7}

\usepackage[colorlinks=true, linkcolor=dblue, citecolor=green, urlcolor=purple]{hyperref}   

\numberwithin{equation}{section}

\newtheorem{theorem}{Theorem}[section]
\newtheorem{lemma}[theorem]{Lemma}

\theoremstyle{remark}

\newtheorem*{rem*}{Remark}

\newtheoremstyle{claim} 
    {1em}                    
    {1em}                    
    {}                   
    {}                           
    {\bfseries}                   
    {.}                          
    {.5em}                       
    {}  
    
\theoremstyle{claim}

\newcommand{\N}{\mathbb{N}}
\newcommand{\Z}{\mathbb{Z}}

\newcommand{\R}{\mathbb{R}}
\newcommand{\C}{\mathbb{C}}

\newcommand{\CF}{\mathcal{F}}

\renewcommand{\le}{\leqslant}
\renewcommand{\ge}{\geqslant}

\newcommand{\bs}\boldsymbol{}

\newcommand{\dee}{\mathrm{d}}

\renewcommand{\tilde}{\widetilde}

\renewcommand{\phi}{\varphi}

\renewcommand{\Re}{\mathrm{Re}}

\begin{document}

\title{A logarithmic structure theorem for multiplicative functions with small partial sums}

\author{Dimitris Koukoulopoulos}
\address{D\'epartement de math\'ematiques et de statistique\\
Universit\'e de Montr\'eal\\
CP 6128 succ. Centre-Ville\\
Montr\'eal, QC H3C 3J7\\
Canada}
\email{dimitris.koukoulopoulos@dms.umontreal.ca}


\date{\today}

\maketitle

\begin{center}
	\dedicatory{\textit{ \small{Dedicated to Roger Heath-Brown on the occasion of his 75th birthday}}}
\end{center}

\begin{abstract}
Let $D\in\mathbb{N}$, let $A>D+1$, and let $Q\geqslant3$. Consider the class of multiplicative functions $f:\mathbb{N}\to\mathbb{C}$ such that $|\sum_{n\leqslant x}f(n)|\le x(\log Q)^{A-D-1}/(\log x)^A$ for all $x\geqslant Q$, and such that $|\Lambda_f|\leqslant D\Lambda$, where $\Lambda_f$ is defined via the Dirichlet convolution identity $f\log=\Lambda_f*f$ and $\Lambda$ denotes von Mangoldt's function. We prove there exist parameters $m\in\{0,1,\dots,D\}$ and $Q=Q_D\leqslant Q_{D-1}\le \cdots\leqslant  Q_m<Q_{m+1}=\infty$ such that $\sum_{p\in I} \mathrm{Re}(f(p)+j)/p=O_{A,D}(1)$ for all $j=m,m+1,\dots,D$ and all compact intervals $I\subset[Q_{j+1},Q_j)$. Moreover, when $|\sum_{n\leqslant  x}f(n)|\le x^{1-1/\log Q}/(\log x)^{D+1}$ for all $x\geqslant Q$, we relate the parameters $m$ and $Q_j$ to the location of zeroes of the Dirichlet series $\sum_{n\geqslant1} f(n)/n^s$ in the ball $B(1,1/\log Q)$. These results generalize work of the author when $D=1$. Their proof builds on earlier work of the author with Soundararajan, and of Sachpazis.
\end{abstract}


\section{Introduction}

\subsection{A heuristic argument}\label{sec:heuristic} Let us begin by describing a classical idea about zeroes of $L$-functions. 

We consider a multiplicative function $f:\N\to\C$ and its Dirichlet series
\[
L(s,f)  =\sum_{n=1}^\infty \frac{f(n)}{n^s} .
\]
We further assume that this is an $L$-function in the sense of Selberg \cite[Chapter 5]{IK} with no pole at $s=1$ and with conductor $q$. We then expect that
\[
\sum_{p\le x}f(p)\log p \approx - \sum_\rho \frac{x^\rho}{\rho} 
\] 
for $x\ge q^C$ with $C$ large, where the summation runs over all zeroes $\rho$ of $L(s,f)$ that lie in the ball $B(1,1/\log q)$, listed according to their multiplicity.\footnote{The uniformity in $q$ here is of Linnik-type. For Dirichlet $L$-functions, see \cite[Proposition 18.5]{IK}.} Using partial summation, we find that
\[
\sum_{p\in I} \frac{f(p)}{p} \approx - \sum_{\rho} \int_I \frac{\dee y}{y^{2-\rho}\log y} .
\]
for any compact interval $I\subset[q^C,\infty)$. A calculation reveals that
\[
\int_I \frac{\dee y}{y^{2-\rho}\log y} 
	= \begin{cases}
			\int_I \frac{\dee y}{y\log y} +O(1)&\text{if}\ I\subset[e,e^{1/|\rho-1|}],\\
			O(1) &\text{if}\ I\subset[e^{1/|\rho-1|},\infty).
		\end{cases}
\]
We thus see that, when we examine the logarithmic sums $\sum_{p\in I}f(p)/p$, there is a sharp phase transition with regard to the influence of the zero $\rho$. This transition occurs at primes of size $e^{1/|\rho-1|}$. Hence, if $\rho_1,\dots,\rho_k$ are the zeroes of $L(s,f)$ in $B(1,1/\log q)$ listed according to their multiplicity and ordered so that
\[
|\rho_k-1|\ge \cdots \ge|\rho_1-1|,
\]
then we expect that 
\[
\sum_{p\in I} \frac{f(p)}{p} = - j \sum_{p\in I} \frac{1}{p}  + O(1) 
\]
for $j=0,1,\dots,k$ and for all compact intervals $I\subset[e^{1/|\rho_{j+1}-1|},e^{1/|\rho_j-1|})$, with the conventions that $\rho_{k+1}=1/\log q$ and $\rho_0=1$. 

To give a simple example, if $\chi$ is a primitive Dirichlet character of conductor $q>1$ whose $L$-function has a Siegel zero $\beta\in[1-1/\log q,1]$, then the above discussion suggests that
\[
\sum_{q<p\le e^{1/(1-\beta)}} \frac{1+\chi(p)}{p}=O(1)
\]
and
\[
\sum_{y<p\le z} \frac{\chi(p)}{p} = O(1) \quad( z\ge y \ge e^{1/(1-\beta)}) .
\]
This is indeed known. It follows, for example, from Proposition 8.5 in \cite{IK} or by combining Theorems 1.6 and 2.1 in \cite{dk-gafa}. 

In this note, we confirm the above heuristic in a much more general setting. 

\subsection{The class of functions $\CF_{A,D}(Q)$}

 Throughout, we let $f:\N\to\C$ denote a multiplicative function and we let $\Lambda_f$ be the arithmetic function defined via the convolution identity 
\[
f\log = f*\Lambda_f. 
\]
We shall assume that
\begin{equation}
	\label{eq:Lambda_f}
|\Lambda_f|\le D\cdot \Lambda
\end{equation}
for some fixed integer $D$. This implies that $|f|\le\tau_D$, as well as that $|g|\le \tau_D$ with $g$ denoting the Dirichlet inverse of $f$ (cf.~\cite[Lemma 2.2]{KS}). We could work with a weaker condition than \eqref{eq:Lambda_f}, but we choose this one for the sake of its striking simplicity. When $L(s,f)$ is an $L$-function in Selberg's class, \eqref{eq:Lambda_f} follows by Ramanujan's conjecture. 

We will further assume that there exists a parameter $Q\ge3$ and a constant $A>D+1$ such that 
\begin{equation}
\label{eq:f-small}
	\bigg| \sum_{n\le x} f(n)\bigg| \le x\cdot \frac{(\log Q)^{A-D-1}}{(\log x)^A} \quad(x\ge Q) .
\end{equation}
We think of $A$ as fixed. On the other hand, we allow $Q$ to vary; it plays an analogous role to that of the analytic conductor in the theory of $L$-function. Our results will be uniform in $Q$. In relation to the discussion of Section \ref{sec:heuristic}, condition \eqref{eq:f-small} is a weak analogue of $L(s,f)$ being an entire $L$-function of conductor $q$. 

We denote by $\CF_{A,D}(Q)$ the class of multiplicative functions $f$ satisfying \eqref{eq:Lambda_f} and \eqref{eq:f-small}. For such $f$, the Dirichlet series $L(s,f)$ converges for $\Re(s)\ge1$ in virtue of \eqref{eq:f-small}. Moreover, by work of the author and Soundararajan \cite[Proposition 2.4(c)]{KS}, we know that $L(s,f)$ has a zero of multiplicity $\le D$ at $s=1$. 

\subsection{New results}

We are now ready to state the main result of this paper.

\begin{theorem}
	\label{thm:main theorem} Fix $D\in\N$ and $A>D+1$. Let $Q\ge1$ and let $f\in\CF_{A,D}(Q)$. Let $m\le D$ denote the multiplicity of the zero of $L(s,f)$ at $s=1$. Then, there exist parameters
			\[
			Q=Q_{D+1}\le Q_D\le \cdots \le Q_{m+1}< Q_m=\infty
			\]
			such that, for each $j\in\{m,m+1,\dots,D\}$, we have that
			\[
			\sum_{p\in I} \frac{\Re(f(p))+j}{p} = O_{A,D}(1) 
			\]
		 uniformly over all compact intervals $I\subseteq[Q_{j+1},Q_j)$.
\end{theorem}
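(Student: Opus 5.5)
Our plan is to proceed by downward induction on $j$, building the thresholds $Q_j$ one at a time from $Q_{D+1}=Q$. The basic tool is a Hal\'asz-type ``logarithmic'' estimate deduced from \eqref{eq:f-small} together with \eqref{eq:Lambda_f}. Write $\Lambda_f(p^k)=b(p)\ldots$; for primes, $\Lambda_f(p)=f(p)\log p$ with $|f(p)|\le D$. The first step is to work with the auxiliary Dirichlet series
\[
F_\sigma := L(\sigma,f) \quad\text{for } \sigma=1+1/\log x,\ x\ge Q.
\]
By partial summation from \eqref{eq:f-small}, we get the derivative bounds $|L^{(k)}(\sigma,f)|\ll_{k,A}(\log Q)^{A-D-1}(\log x)^{k+1-A}$ for $k<A-1$, and in particular for $k\le D$, since $A>D+1$.

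Conversely, we have $\log|L(\sigma,f)|=\sum_p \Re f(p)/p^\sigma+O_D(1)$. By the standard comparison (Mertens), this equals
\[
\sum_{p\le x}\frac{\Re f(p)}{p}+O_D(1).
\]
So the quantity we must control is the function $\phi(x)=\log|L(1+1/\log x,f)|$ and its behaviour in $\log\log x$.

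The second step, and the heart of the matter, is a rigidity statement for $\phi$. We view $L(s,f)$ near $s=1$ as a function with controlled Taylor coefficients up to order $D$, and with a lower bound coming from the fact that $|L|=\exp(\sum \Re f(p)/p^\sigma)\ge (\log x)^{-D}e^{-O(1)}$. Combining the upper bounds on $L^{(k)}(\sigma)$ for $k\le D$ with the Taylor expansion in $\sigma$ over the range $[1+1/\log x,\,1+1/\log Q]$ shows that $L(s,f)$ behaves like a polynomial of degree $\le D$ in $(s-1)$ times an essentially constant factor. Concretely, set $P(u)=L(1+u,f)$ for $0<u\le 1/\log Q$. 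We will show $\log|P(u)|=\sum_j \log\min(1,u/r_j)\cdot(\text{multiplicity})+\ldots$: the ``roots'' $r_j$ (moduli of effective zeroes) are the zeroes of a Taylor polynomial of degree $D$. The approach we would use is the one of Koukoulopoulos--Soundararajan: the function $G(s)=L'/L$ satisfies $-G=\sum\Lambda_f n^{-s}$. Then $u\mapsto\log|P(u)|$ has derivative bounded by $D/u$ in absolute value. Meanwhile, the higher-derivative bounds (via Cauchy estimates on a disc of radius $\asymp u$ around $1+u$, using \eqref{eq:f-small} to extend bounds to complex $s$ with $\Re s\ge1$) force this derivative, multiplied by $u$, to be within $O(1)$ of an integer on dyadic ranges except at boundedly many transitions. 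This is the main obstacle. We would attack it by applying Jensen's formula to $L(s,f)$ on discs $B(1+u,Cu)$, counting zeroes $N(u)$ in $B(1,u)$, with $N(u)\le D$ (since $|L|$ growth is polynomial of degree $\le D$ there by the derivative bounds). We then show $u\,\frac{d}{du}\log|P(u)|=N(u)+O(1)$ in an averaged sense.

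The third step converts this into the statement. The function $N(u)$ is nondecreasing in $u$ and takes integer values in $[m,D]$. Here $N(u)=m$ as $u\to0$, since the zero at $1$ has multiplicity $m$ and, by convergence for $\Re s\ge1$, no other zeroes accumulate there. Define $Q_j=\exp(1/u_j)$, where $u_j=\inf\{u: N(u)\ge j\}$, truncated to lie in $[Q,\infty]$; this gives $Q_m=\infty$ and the required ordering. For an interval $I=[y,z]\subset[Q_{j+1},Q_j)$, we have $N(u)=j$ for $u\in[1/\log z,1/\log y]$. Integrating the relation $u\,\frac{d}{du}\log|P|=-j+O(1)$ in $\log u$ (the sign from $\Re f(p)\approx -j$) then gives
\[
\sum_{y<p\le z}\frac{\Re f(p)}{p}=-j\log\frac{\log z}{\log y}+O(1).
\]
Combined with Mertens, this yields the theorem. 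The averaging losses are absorbed because they occur only within $O(1)$ dyadic scales of each $u_j$, and there are at most $D$ of them.
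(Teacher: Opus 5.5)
\textbf{There is a genuine gap}, and it sits at the step you yourself call the main obstacle.

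To have $B(1,u)\subset B(1+u,Cu)$ you need $C\ge2$, so the Jensen discs reach $\Re(s)\le 1-u$. But for $f\in\CF_{A,D}(Q)$, hypothesis \eqref{eq:f-small} only gives convergence of $L(s,f)$ on $\Re(s)\ge1$, and no analytic continuation across that line. The paper points this out right after Theorem \ref{thm:main theorem}. Since $L(s,f)=\exp\big(\sum_n\Lambda_f(n)/(n^s\log n)\big)\neq0$ for $\Re(s)>1$, your $N(u)$ could only count zeroes on the line $\Re(s)=1$. Those do not produce the transitions the theorem must detect, such as the one at $e^{1/(1-\beta)}$ caused by a Siegel zero $\beta<1$ (Section \ref{sec:heuristic}).

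What you have sketched is essentially the proof of Theorem \ref{thm:zeroes}. The paper carries that out via Borel--Carath\'eodory (Lemma \ref{lem:BC}) only under the stronger hypothesis \eqref{eq:f-small strong}, which gives convergence for $\Re(s)\ge1-1/\log Q$.

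There is a second problem even if you grant a relation like $u\,\frac{\dee}{\dee u}\log|P(u)|=N(u)+O(1)$ on each dyadic range. Integrating it in $\log u$ over $[1/\log z,1/\log y]$ loses $O(\log(\log z/\log y))$, not $O(1)$. You need the deviation to be summable over dyadic scales. In the zero picture this comes from the decay $O(u/|\rho-1|)$ or $O(|\rho-1|/u)$ of each zero's contribution plus an $O(u\log Q)$ remainder. Without continuation you have no source for it.

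The missing idea is a real-variable surrogate for zeroes. The paper never leaves the real axis. It works with sifted series $L_y(s,\cdot)$, $y\ge Q$, applied to $\tau_k*g$ with $g=\tau_m*f$.

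\begin{enumerate}
\item The sieve asymptotic of Lemma \ref{lem:sieving tau_k*f} yields Lemma \ref{lem:tau_m*f}(c): $L_y'(\sigma,h)\ll|L_y(1+1/\log z,h)|/((\sigma-1)^2\log z)+\log y$ for $h=\tau_k*f$.
\item Integrating in $\sigma$ shows that $L_y(1+1/\log w,\tau_k*g)$ stays comparable to its value at $w=Y_k$ once $\log w\gg\log y/|L_y(1+1/\log Y_k,\tau_k*g)|$.
\item That quantity plays the role of $1/|\rho-1|$. The thresholds are defined by $Y_{k+1}=\min_{Q\le y\le Y_k}y^{\max\{1,1/|L_y(1+1/\log Y_k,\tau_k*g)|\}}$.
\item Passing from $\tau_k*g$ to $\tau_{k+1}*g$ shifts $\Re(g(p))$ by $1$, which is how $j$ increments.
\end{enumerate}

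Separately, your Step 1 bound $L^{(k)}(\sigma,f)\ll(\log Q)^{A-D-1}(\log x)^{k+1-A}$ is false. The terms $n\le Q$ can contribute as much as $(\log Q)^{D+k}$. For $k=0$ and $x\to\infty$ your bound would force $L(1,f)=0$, which fails whenever $m=0$ (e.g.\ $f=\chi$ nonprincipal). Uniform-in-$Q$ control requires sifting out the primes up to some $y\ge Q$ (Lemma \ref{lem:sifted-f}). It also requires measuring derivatives relative to $|L_y(1+1/\log z,f)|$, as in Lemma \ref{lem:L^{(j)}(1,f)}(b).
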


\begin{rem*}
The condition that $A>D+1$ is optimal. Indeed, if we consider the multiplicative function $f$ defined by the formula $\Lambda_f(p^k)=\big(-D+1_{p>100}/\log\log p\big)\Lambda(p^k)$ for all prime powers $p^k$, then it is possible to show that $|\sum_{n\le x}f(n)|\asymp_D x/(\log x)^{D+1}$ as $x\to\infty$. Moreover, $f$ does not satisfy the conclusion of Theorem \ref{thm:main theorem}. 
\end{rem*}

Unlike the situation in Section \ref{sec:heuristic}, there is no simple definition for the transition points $Q_j$ in Theorem \ref{thm:main theorem}; the proof does give explicit formulas for them, but they are not very enlightening. Note that our assumption \eqref{eq:f-small} does not guarantee the analytic continuation of $L(s,f)$ to the right of $\Re(s)=1$, so we cannot speak about zeroes of $L(s,f)$ in that region and relate their location to the size of the $Q_j$'s. Nonetheless, these transition points do exist, thus demonstrating that the heuristic argument described in Section \ref{sec:heuristic} is a very general phenomenon. In this sense, Theorem \ref{thm:main theorem} is very much in the spirit of the theory of {\it pretentious multiplicative functions}. 

On the other hand, if we assume a stronger version of \eqref{eq:f-small}, then it is possible relate the $Q_j$'s to the location of the zeroes of $L(s,f)$. To this end, we let $\CF^{\mathrm{strong}}_D(Q)$ denote the class of multiplicative functions $f$ such that $|\Lambda_f|\le D\Lambda$ and  
\begin{equation}
	\label{eq:f-small strong}
	 \bigg| \sum_{n\le x} f(n)\bigg| \le   \frac{x^{1-1/\log Q}}{(\log x)^{D+1}} \quad (x\ge Q) .
\end{equation}
For these functions $f$, the Dirichlet series $L(s,f)$ converges in the half-plane $\Re(s)\ge1-1/\log Q$ and the parameters $Q_j$ of Theorem \ref{thm:main theorem} can be related to the location of the zeroes of $L(s,f)$ in the ball $B(1,1/\log Q)$. 

\begin{theorem}
	\label{thm:zeroes}
	Let $f\in\CF_D^{\mathrm{strong}}(Q)$. 	There exists a constant $c_0=c_0(D)\in(0,1]$ such that $L(s,f)$ has at most $D$ zeroes in the ball $B(1,c_0/\log Q)$ counted according to their multiplicity. Moreover, if we let $m$ denote the multiplicity of the zero of $L(s,f)$ at $s=1$, and if we let $\rho_{m+1},\dots,\rho_d$ be the remaining zeroes of $L(s,f)$ in $B(1,c_0/\log Q)$ listed so that
	\[
	0<|\rho_{m+1}-1|\le |\rho_{m+2}-1|\le \cdots \le |\rho_d-1|\le \frac{c_0}{\log Q},
	\]
	then, for each $j\in\{m,m+1,\dots,d\}$, we have that
	\[
	\sum_{p\in I} \frac{\Re(f(p))+j}{p} = O_{A,D}(1) 
	\]
	uniformly over all compact intervals $I\subseteq[e^{1/|\rho_{j+1}|}, e^{1/|\rho_j-1|})$, with the conventions that $\rho_m=1$ and $\rho_{d+1}=1+c_0/\log Q$. 	
\end{theorem}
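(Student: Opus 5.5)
The plan is to work directly with the analytic continuation of $L(s,f)$ to $\Re(s)>1-1/\log Q$ that \eqref{eq:f-small strong} provides, and to compare $L(s,f)$ with a polynomial in $(s-1)$ carrying the zeroes. First, by partial summation from \eqref{eq:f-small strong}, $L(s,f)=\int_{1^-}^\infty y^{-s}\,\dee S(y)$ with $S(y)=\sum_{n\le y}f(n)$. This gives analyticity and the bound $|L^{(k)}(s,f)|\ll_{D,k}(\log Q)^{k}\cdot(\text{something polynomial in }\log Q)$ in the ball $B(1,1/(2\log Q))$; the contribution of $n\le Q$ is bounded using $|f|\le\tau_D$. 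In the opposite direction we need a lower bound. Since $f\in\CF_{A,D}(Q)$ for any $A$ (the strong condition implies \eqref{eq:f-small}), \cite[Proposition 2.4]{KS} shows that the zero at $s=1$ has multiplicity $m\le D$. A quantitative version of that argument, taking $\sum_{k\le D}|L^{(k)}(1+1/\log Q,f)|(\log Q)^{-k}$ against the Euler product on the line $\Re(s)=1+1/\log Q$, should show that some normalized derivative of order $\le D$ is $\gg_D (\log Q)^{-D}\cdot(\text{scale})$ at a point near $1$. With these two bounds, Jensen's formula on a disc of radius $\asymp 1/\log Q$ bounds the number of zeroes in $B(1,c_0/\log Q)$ by $D$, provided $c_0$ is small. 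The key input here is that the normalized Taylor coefficients of order $\le D$ cannot all be tiny compared to the maximum; this is exactly what $|\Lambda_f|\le D\Lambda$ gives, through the relation $L'/L=-L(s,\Lambda_f)$ and the bound $|L'/L(\sigma)|\le D/(\sigma-1)+O(1)$ for $\sigma>1$.

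Next, factor $L(s,f)=(s-1)^m\prod_{j=m+1}^d(s-\rho_j)\,H(s)$ with $H$ zero-free on $B(1,c_0/\log Q)$. By Borel--Carathéodory, $H'/H(s)=O(\log Q)$ on $B(1,c_0/(2\log Q))$. Hence for $s=1+1/\log x$ with $\log x\ge 2\log Q/c_0$,
\[
-\frac{L'}{L}(s,f)=-\frac{m}{s-1}-\sum_{j=m+1}^d\frac{1}{s-\rho_j}+O(\log Q).
\]
We also need a version for $\log x<2\log Q/c_0$, i.e.\ $s-1\gg 1/\log Q$. There the $O(\log Q)=O(\log x)$ bounds are fine, because such intervals $I\subset[Q,Q^{O(1)}]$ contribute $O(1)$ trivially since $|f(p)|\le D$.

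Then convert this into prime sums. We have $-\frac{L'}{L}(s,f)=\sum\Lambda_f(n)n^{-s}$, whose prime part is $\sum_p f(p)\log p\,p^{-s}$, with prime powers contributing $O(1)$. Integrating in $\sigma$ from $1+1/\log y$ to $1+1/\log z$ and taking real parts, a standard Mertens-type comparison turns $\sum_p \Re f(p)\,p^{-1}(p^{-1/\log z}-p^{-1/\log y})$ into $\sum_{y<p\le z}\Re f(p)/p+O(1)$. The boundedness of the error terms is uniform because $|f(p)|\le D$; this uses the usual kernel argument $\int \frac{x^{-u}}{...}$, or alternatively a smoothed version via Lemma-type Mellin kernels. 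On the right-hand side, $\int\Re\frac{1}{\sigma-\rho}\,\dee\sigma=\log|\sigma-\rho|$. Over $\sigma-1\in[1/\log z,1/\log y]$ this equals $\log(\log z/\log y)+O(1)$ when $|\rho-1|\le 1/\log z$, and $O(1)$ when $|\rho-1|\ge 1/\log y$. The $O(\log Q)$ error integrates to $O(\log Q/\log y)=O(1)$. Summing over zeroes, an interval $I=(y,z]\subset[e^{1/|\rho_{j+1}-1|},e^{1/|\rho_j-1|})$ gives $\sum_{p\in I}\Re f(p)/p=-j\log(\log z/\log y)+O_D(1)=-j\sum_{p\in I}1/p+O(1)$, as required.

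The main obstacle is the first step: proving, uniformly in $Q$, that $L(s,f)$ is not too small on the circle $|s-1|=c_0/\log Q$ relative to its size on a larger circle. Without this, Jensen's formula gives no zero count, and Borel--Carathéodory gives no bound on $H'/H$. My first attempt would be to lower-bound $|L(1+1/\log Q,f)|$ via the Euler product: $\log|L|\ge -D\log\log Q-O(1)$. The upper bound on the larger circle is $(\log Q)^{O(D)}$ from partial summation with \eqref{eq:f-small strong}. The ratio is then $(\log Q)^{O(D)}$ rather than $O(1)$, so the crude Jensen bound counts $O(D\log\log Q)$ zeroes. To do better I would apply Jensen to $(s-1)^{-D}\cdot$ normalized $L$, or equivalently work with $L(s,f)(\log Q)^{D}$ and exploit that $|L|\asymp (\sigma-1)^{\Re\,\text{mean}}$. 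If this fails, the fallback is to count zeroes via the argument principle applied to $L'/L$ on $\Re s>1$, using $|L'/L(\sigma)|\le D/(\sigma-1)$.
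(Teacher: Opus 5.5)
There is a genuine gap, and it is exactly the step you flag as the main obstacle. You never show, uniformly in $Q$, that $\max_{|s-1|\le c/\log Q}|L(s,f)|\ll_D|L(1+1/\log Q,f)|$. Both your zero count (via Jensen) and your bound $H'/H=O(\log Q)$ (via Borel--Carath\'eodory) need this $O_D(1)$ ratio. Your proposed repairs do not address the source of the loss. The factor $(\log Q)^{O(D)}$ comes from the terms $n\le Q$, equivalently from the Euler factors at $p\le Q$, which you estimate trivially through $|f|\le\tau_D$. It has nothing to do with the zero at $s=1$, so renormalizing by $(s-1)^{-D}$ does not help. The argument-principle fallback still presupposes a partial-fraction expansion of $L'/L$ with error $O(\log Q)$, which is precisely what is missing. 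Note that for $|s-1|\ll1/\log Q$ the small-prime factor $\prod_{p\le Q}L_p(s,f)$ has essentially constant modulus $\asymp_D\exp(\sum_{p\le Q}\Re(f(p))/p)$. So it should be divided out, not bounded.

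The missing idea, which is how the paper proceeds in Lemma \ref{lem:BC}, is to sieve out the primes $p\le Q$. Write $L(s,f)=L_Q(s,f)\prod_{p\le Q}L_p(s,f)$ with $L_Q(s,f)=\sum_{P^-(n)>Q}f(n)n^{-s}$.
\begin{enumerate}
\item Lemma \ref{lem:sifted-f} (with $\eta=1$, $A=D+1$, $y=Q$) plus partial summation gives $L_Q(s,f)\ll_D1$ for $s=\sigma+it$ with $\sigma\ge1-\delta$, $|t|\le1$, where $\delta=\alpha/\log Q$.
\item The Euler product over $p>Q$ gives $|L_Q(1+\delta/100,f)|\asymp_D1$. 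So Borel--Carath\'eodory applies to $L_Q$ with an $O_D(1)$ ratio.
\item The factors with $p\le Q$ are zero-free for $\Re(s)>0$, because $|\Lambda_f|\le D\Lambda$ makes $\log L_p$ converge there. Their logarithmic derivatives total $\ll_D\sum_{p\le Q}\log p/p^{1-\delta}\ll\log Q$.
\end{enumerate}
This yields $\frac{L'}{L}(s,f)=\sum_\rho\frac{1}{s-\rho}+O_D(\log Q)$, with $O_D(1)$ zeroes in $B(1+\delta/100,\delta/2)$.

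The sharp count $D$ then comes from positivity at $s_1=1+\delta/M$. Each zero in $B(1,\delta/M^2)$ contributes $(M+O(1))/\delta$ to $\Re\sum_\rho(s_1-\rho)^{-1}$. Every other zero contributes nonnegatively since $\Re(\rho)\le1$. Meanwhile $|\frac{L'}{L}(s_1,f)|\le-D\frac{\zeta'}{\zeta}(s_1)=DM/\delta+O(D)$, and taking $M$ large forces at most $D$ zeroes. Once the $O_D(1)$ ratio is available, your Jensen idea would also work. You would combine it with the lower bound $\log|L(1+\epsilon,f)|\ge\log|L(1+1/\log Q,f)|-D\log\frac{1}{\epsilon\log Q}-O(1)$, which follows from $|\frac{L'}{L}(\sigma,f)|\le D/(\sigma-1)+O(1)$.

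The rest of your argument matches the paper's. That is: integrating $L'/L$ over $[1+1/\log z,1+1/\log y]$, converting to prime sums via Lemma \ref{lem:L(s,f)}, and the case analysis of $\log|\sigma-\rho|$.
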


This theorem generalizes and makes rigorous the heuristic described in Section \ref{sec:heuristic}. A useful aspect of is that we may combine it with the theory of pretentious multiplicative functions to prove zero-free regions for various $L$-functions that are of similar strength to the classical ones. This is explained in Chapter 22 of \cite{dk-book} for the case of Dirichlet $L$-functions. For higher degree $L$-functions, we need to know Ramanujan's conjecture to verify \eqref{eq:Lambda_f}. Given that Ramanujan's conjecture remains wide open, it would be interesting to study to what extent we can relax \eqref{eq:Lambda_f} and still obtain Theorem \ref{thm:zeroes}. This question goes beyond the scope of the present paper; we will return to it in a future paper.


\subsection{Relation to earlier work}

The case $D=1$ of Theorem \ref{thm:main theorem} follows by \cite[Theorem 2.1]{dk-gafa}. That result is a bit stronger when $j=D=1$, because it implies that $\sum_{Q_2\le p< Q_1}|1+f(p)|/p\ll_A1$ in that case. Such an improvement is possible in the general case $j=D\ge1$, but we do not pursue it in this paper. The proof in \cite{dk-gafa} is quite involved. A much simpler proof of the special case when $f(n)=\chi(n)n^{it}$ with $\chi$ a Dirichlet character and $t\in\R$ is given in \cite[Theorem 22.5]{dk-book}. It is that proof which forms the basis of the argument we use to establish Theorem \ref{thm:main theorem}. 

A non-uniform version of Theorem \ref{thm:main theorem} follows by Lemma 4.1 in \cite{KS}. According to that result, we have
\begin{equation}
	\label{eq:KS}
	\sum_{p\le x} \frac{\Re(f(p))+m}{p} =O_f(1)
\end{equation}
for all $x\ge1$, where $m$ is as in the statement of Theorem \ref{thm:main theorem}. The main difference between the above estimate and Theorem \ref{thm:main theorem} is that the implicit constants in the latter depend at most on $A$ and $D$, and not on the specific values of $f$ as in \eqref{eq:KS}. This is an important distinction when using Theorem \ref{thm:main theorem}  in conjunction with Theorem \ref{thm:zeroes} to establish zero-free regions for $L$-functions.

It is also important to mention a related result of Sachpazis \cite{Sachpazis}. Building on earlier work of the author and Soundararajan \cite{dk-gafa,KS}, he proved that if $f\in\CF_{A,D}(Q)$ with $A>D+1$, then there exist real numbers $\gamma_1,\dots,\gamma_d$ with $d\le D$ such that 
\begin{equation}
	\label{eq:Sachpazis}
\lim_{x\to\infty} \frac{1}{x} \sum_{p\le x} \big( f(p)+p^{i\gamma_1}+\cdots+p^{i\gamma_d}\big) \log p = 0 . 
\end{equation}
In addition, the numbers $1+i\gamma_j$ are exactly the zeroes of $L(s,f)$ on the line $\Re(s)=1$ listed according to their multiplicity. As with \eqref{eq:KS}, Sachpazis's result depends on $f$. The asymptotic formula he proves is valid for $x$ large enough in terms of $f$. This shows a close connection zeroes of $L(s,f)$ close to the line $\Re(s)=1$. 

The zeroes $1+i\gamma_j$ partly explain Theorem \ref{thm:main theorem}. Indeed, if $\gamma\in[-C,C]$, then Lemma 9.1 in \cite{GKM} implies that
\[
\sum_{y<p\le z} \frac{1}{p^{1+i\gamma}} 
	= 
		\begin{cases}
				\log(\log z/\log y) + O_C(1) &\text{if}\ y\le z\le e^{1/|\gamma|},\\
				O_C(1) &\text{if}\ z\ge y\ge e^{1/|\gamma|}.
		\end{cases}
\]
However, as Theorem \ref{thm:zeroes} shows, the zeroes of $L(s,f)$ on the line $\Re(s)=1$ are not enough to prove Theorem \ref{thm:main theorem}.

Lastly, when $D=1$, it is worth noticing that we know a more uniform version of \eqref{eq:Sachpazis} that is more in line with Theorem \ref{thm:main theorem} - see Theorem 1.1. in \cite{dk-gafa}. We expect that such an estimate can be proven for general $D$ by combining the methods of this paper and \cite{dk-gafa}.

\section{Sieve estimates}

\begin{lemma}\label{lem:sieving tau}
Let $k\in\N$, let $y\ge3/2$, and let $V(y)=\prod_{p\le y}(1-1/p)$. There are real numbers $c_{k,j}(y)=O_k(1)$ for $j=0,1,\dots,k-1$ such that
\[
\sum_{\substack{n\le x \\ P^-(n)>y}} \tau_k(n)
	 = xV(y)
	 	\sum_{j=0}^{k-1} c_{k,j}(y) \bigg(\frac{\log x}{\log y}\bigg)^{k-1-j}
			+ O_k\bigg( \frac{x^{1-2^{-k}/\log y}}{\log y} \bigg) 
\]
uniformly for all $x\ge y$. 
\end{lemma}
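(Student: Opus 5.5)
We argue by induction on $k$, using the Dirichlet hyperbola method (or rather the convolution structure $\tau_k=\tau_{k-1}*1$) together with a fundamental-lemma-type sieve estimate for the case $k=1$. Write $\mathbbm{1}_y(n)$ for the indicator of $P^-(n)>y$. Since $P^-(ab)>y$ if and only if $P^-(a)>y$ and $P^-(b)>y$, the function $\tau_k\mathbbm{1}_y$ equals $(\tau_{k-1}\mathbbm{1}_y)*\mathbbm{1}_y$.

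\emph{Base case $k=1$.} Here the claim reads
\[
\Phi(x,y):=\#\{n\le x: P^-(n)>y\}=c_{1,0}(y)\,xV(y)+O\big(x^{1-1/(2\log y)}/\log y\big).
\]
We take $c_{1,0}(y)=1$. The cleanest route is the Rankin trick combined with the fundamental lemma. Write $1_{P^-(n)>y}=\sum_{d\mid (n,P(y))}\mu(d)$ and split into $d$ with $d\le x^{1/2}$ and $d> x^{1/2}$. Alternatively, and more robustly, we can use the Dirichlet series $F(s)=\zeta(s)\prod_{p\le y}(1-p^{-s})$ and Perron's formula: shifting to $\Re s=1-1/\log y$ picks up the residue $xV(y)$, and on the new line the Euler product is $\ll \prod_{p\le y}(1+p^{-1+1/\log y})\ll \log y$. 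We lose a $\log y$ here, which must be recovered.

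A more elementary route is better. Use Rankin: $\Phi(x,y)-xV(y)=\sum_{d\mid P(y)}\mu(d)(\lfloor x/d\rfloor-x/d)$, truncated via a Brun/fundamental-lemma sieve of level $x^{1/2}$. The standard fundamental lemma gives an error of $xV(y)e^{-u}$ plus $O(x^{1/2+o(1)})$, where $u=\log x/\log y$. Since $V(y)\asymp1/\log y$ and $e^{-u}= x^{-1/\log y}$, this is of the required shape $x^{1-1/(2\log y)}/\log y$ once $x\ge y$. For $y\le x\le y^{C}$ we instead use the trivial bound $\Phi\ll x/\log y$ (Brun–Titchmarsh type upper sieve), since there $x^{-1/(2\log y)}\asymp1$. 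Either way the base case holds with exponent $2^{-1}$.

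\emph{Inductive step.} Assume the statement for $k-1$ with main term $xV(y)P_{k-1}(\log x/\log y)$, where $P_{k-1}$ has degree $k-2$ and $O(1)$ coefficients. Write
\[
\sum_{\substack{n\le x\\P^-(n)>y}}\tau_k(n)=\sum_{\substack{a\le x\\P^-(a)>y}}\tau_{k-1}(a)\,\Phi(x/a,y).
\]
For $a\le x/y$ we insert the base case, writing $\Phi(x/a,y)=(x/a)V(y)+O((x/a)^{1-1/(2\log y)}/\log y)$. The error sums to
\[
\frac{x^{1-1/(2\log y)}}{\log y}\sum_{P^-(a)>y}\frac{\tau_{k-1}(a)}{a^{1-1/(2\log y)}}\ll \frac{x^{1-1/(2\log y)}}{\log y}\prod_{p>y,\ p\le x}\Big(1-p^{-1+1/(2\log y)}\Big)^{-(k-1)}.
\]
This product is $\ll(\log x/\log y)^{k-1}$, which is absorbed by weakening the exponent: $(\log x/\log y)^{k-1}x^{-1/(2\log y)}\ll_k x^{-2^{-k}/\log y}$. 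This is exactly why the exponent halves at each step.

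The range $x/y<a\le x$ has $\Phi(x/a,y)=1$ (only $n=1$ survives) and contributes $\sum_{x/y<a\le x,\,P^-(a)>y}\tau_{k-1}(a)$, which is handled by the induction hypothesis as a difference. The main term becomes $xV(y)\sum_{a\le x/y,\,P^-(a)>y}\tau_{k-1}(a)/a$. We evaluate this by partial summation from the induction hypothesis. The integral of $V(y)P_{k-1}(\log t/\log y)\,dt/t$ gives $V(y)\log y\cdot(\text{polynomial of degree }k-1\text{ in }\log x/\log y)$, and $V(y)\log y\asymp1$, so the coefficients are $O_k(1)$. The boundary terms and the integrated error $\int t^{-2^{-(k-1)}/\log y}\,dt/(t\log y)$ produce an $O(1)$ constant, which is absorbed into $c_{k,k-1}(y)$, plus a tail of the same error shape.

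\emph{Main obstacle.} The base case is where the work lies: obtaining $\Phi(x,y)=xV(y)+O(x^{1-1/(2\log y)}/\log y)$ uniformly in $x\ge y$ without losing an extra $\log y$. I would first try the fundamental lemma with the Rankin-type tail bound. Should that fail, I would take a smoothed Perron argument with the crude bound and then redistribute the $\log$-losses into the exponent, as in the inductive step.
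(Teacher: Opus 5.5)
\textbf{There is a genuine gap in your inductive step, in how you treat the range $a\le x/y$.} Your decomposition is one-sided rather than a hyperbola split, and the one-sided version breaks down there.

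You insert $\Phi(x/a,y)=(x/a)V(y)+E_y(x/a)$ and bound $\sum_{a\le x/y,\,P^-(a)>y}\tau_{k-1}(a)\,|E_y(x/a)|$ absolutely. To do so you claim $\prod_{y<p\le x}(1-p^{-1+1/(2\log y)})^{-(k-1)}\ll(\log x/\log y)^{k-1}$, and this is false. With $u=\log x/\log y$, one has $\sum_{y<p\le x}p^{-1+1/(2\log y)}\asymp e^{u/2}/u$ for large $u$, so the product is at least $\exp(c\,e^{u/2}/u)$. Even the sharp bound from partial summation, $\sum_{a\le x/y,\,P^-(a)>y}\tau_{k-1}(a)a^{-1+1/(2\log y)}\asymp_k e^{u/2}u^{k-2}$, exactly cancels the saving $x^{-1/(2\log y)}=e^{-u/2}$.

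Nor is this merely a lossy estimate. When $x/a\in[y,y^2)$ the base case gives no saving at all, since there $\Phi(t,y)=1+\pi(t)-\pi(y)$. For instance, $E_y(t)\gg t/\log y$ for $t\in[y^{1.1},y^{1.3}]$, because $V(y)\sim e^{-\gamma}/\log y<1/(1.3\log y)$. Hence $\sum_{a\le x/y}\tau_{k-1}(a)|E_y(x/a)|\gg_k x u^{k-2}/\log y$. That is only a factor $u$ below the main term and nowhere near $O(x^{1-2^{-k}/\log y}/\log y)$.

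The signed sum $\sum_{a\le x/y}\tau_{k-1}(a)E_y(x/a)$ in fact contributes to the coefficients $c_{k,j}(y)$. Take $k=2$, fixed large $y$, and $x\to\infty$. The true constant coefficient is read off from the residue of $\zeta(s)^2\prod_{p\le y}(1-p^{-s})^2x^s/s$ at $s=1$. The one your bookkeeping produces differs from it by roughly $(2e^{-\gamma}-1)\,xV(y)$. So the difficulty is not the base case, which (as in the paper) is just the fundamental lemma; it is the inductive step.

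\textbf{The repair is the paper's argument: use the hyperbola method split at $\sqrt{x}$.} Write the sum as $\sum_{a\le\sqrt{x}}\tau_{k-1}(a)\Phi(x/a,y)+\sum_{b\le\sqrt{x}}\sum_{\sqrt{x}<a\le x/b}\tau_{k-1}(a)$, with all variables satisfying $P^-(\cdot)>y$.

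In the first piece, $x/a\ge\sqrt{x}$. The factor $a^{\epsilon}$ that you lose when bounding $E_y(x/a)\ll(x/a)^{1-\epsilon}/\log y$ absolutely is at most $x^{\epsilon/2}$. So half the saving survives, up to a power of $u$. This, and not the absorption of $(\log x/\log y)^{k-1}$, is where the exponent halves.

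In the second piece, the induction-hypothesis errors are likewise only evaluated at arguments $\ge\sqrt{x}$. However, the main term requires $\sum_{b\le\sqrt{x},\,P^-(b)>y}(x/b)(\log(x/b))^{\ell}$, and hence $\Phi(t,y)$ for every $t\in[y,\sqrt{x}]$, including $t$ near $y$. There $E_y$ must be kept in integrated form. After expanding $(\log(x/t))^\ell$ binomially, the completed integrals $(\log y)^{-i}\int_y^\infty E_y(t)(\log t)^i t^{-2}\,\dee t=O_i(1)$ are absorbed into the coefficients $c_{k,j}(y)$. The tails $\int_{\sqrt{x}}^\infty$ are $\ll(\log x)^i x^{-1/(2\log y)}$. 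You already used exactly this device for the constant coming from the induction-hypothesis error in $\sum\tau_{k-1}(a)/a$; it has to be applied to $E_y$ as well.
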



\begin{proof} This follows by a routine application of Dirichlet's hyperbola method. The technical details are a bit complicated, so we give a full proof. 
	
We use induction on $k$ to prove the lemma. When $k=1$, the Fundamental Lemma of Sieve Methods (see, for example, Theorem 19.1 in \cite{dk-book}) implies that
	\begin{equation}
		\label{eq:sieving tau k=1}
	\sum_{\substack{n\le x \\ P^-(n)>y}} \tau_k(n)
	= xV(y) + E_y(x),
	\end{equation}
	where $E_y(x)\ll_c x^{1-1/\log y}/\log y$ for all $x\ge y\ge3$. So the lemma follows with $c_{0,0}(y)=1$. 
	
Next assume that the lemma holds with $k-1$ in place of $k$, and let us prove it for $k$. 	
	Note that $x(\log x)^j -y (\log y)^j  = \int_y^x (\log w)^j \dee w +  j\int_y^x (\log w)^{j-1}\dee w$ for $j\in\N$.  Iterating this identity and combining it with our induction hypothesis, we find that there exist real numbers $d_{k-1,j}(y)=O_k(1)$ for $j=0,1,\dots,k-2$ such that
\begin{equation}
	\label{eq:tau induction hyp}
	\sum_{\substack{n\le x \\ P^-(n)>y}} \tau_{k-1}(n)
	= V(y)\int_y^x 
	\sum_{j=0}^{k-2} d_{k-1,j}(y)  \bigg(\frac{\log w}{\log y}\bigg)^{k-2-j} \dee w
	+ R_{k-1,y}(x),
\end{equation}
where $R_{k-1,y}(x)\ll_k x^{1-2^{-k+1}/\log y}  / \log y$ for all $x\ge y$. Let us now see how to use this formula to conclude the proof.

First of all, when $x\le y^2$, then conditions $n\le x$ and $P^-(n)>y$ imply that $n\in\{1\}\cup\{y<p\le x\}$. Hence, the lemma follows in this case by Chebyshev's estimate. 

Let us assume now that $x\ge y^2$. We have
\begin{align*}
\sum_{\substack{n\le x \\ P^-(n)>y}} \tau_k(n)
	&=\sum_{\substack{b\le \sqrt{x} \\ P^-(b)>y}} 
		\sum_{\substack{\sqrt{x}<a\le x/b \\  P^-(a)>y}} \tau_{k-1}(a) 
	+ \sum_{\substack{a\le \sqrt{x}\\ P^-(a)>y}} \tau_{k-1}(a) \sum_{\substack{b\le x/a \\P^-(b)>y}} 1.
\end{align*}
Using \eqref{eq:sieving tau k=1} and \eqref{eq:tau induction hyp}, we find that
\begin{equation}\label{eq:tau hyperbola method}	
	\begin{split}
\sum_{\substack{n\le x \\ P^-(n)>y}} \tau_k(n)
	&= V(y)  \sum_{\substack{b\le \sqrt{x} \\ P^-(b)>y}} 
		\int_{\sqrt{x}}^{x/b} 
	 	\sum_{j=0}^{k-2} d_{k-1,j}(y) \left(\frac{\log w}{\log y}\right)^{k-2-j}  \dee w \\
	&\qquad +xV(y) 
		\sum_{\substack{a\le \sqrt{x}\\ P^-(a)>y}} \frac{\tau_{k-1}(a)}{a} 
	+  O_k\left(\frac{x^{1-2^{-k}/\log y}}{\log y}\right)  .
	\end{split}
\end{equation}

We first estimate the sum over $a$. By \eqref{eq:tau induction hyp} and partial summation, we have
\[
\begin{split}
\sum_{\substack{a\le \sqrt{x}\\ P^-(a)>y}} \frac{\tau_{k-1}(a)}{a} 
	&= 1+ \int_y^{\sqrt{x}} \frac{1}{w} \dee \sum_{\substack{a\le w\\ P^-(a)>y}} \tau_{k-1}(a)  \\
	&= 1+ V(y)\int_y^{\sqrt{x}} \sum_{j=0}^{k-2} d_{k-2,j}(y)  \bigg(\frac{\log w}{\log y}\bigg)^{k-2-j} \frac{\dee w}{w} 
	+   \int_y^x \frac{1}{w} \dee R_{k-1,y}(w) .
\end{split}
\]
Moreover,
\[
\begin{split}
 \int_y^x \frac{1}{w} \dee R_{k-1,y}(w) 
 	&= \frac{R_{k-1,y}(w)}{w}\bigg|_{w=y}^x  + \int_y^x \frac{R_{k-1,y}(w)}{w^2} \dee w  \\
 	&= \int_y^\infty \frac{R_{k-1,y}(w)}{w^2} \dee w  + O_k(x^{1-2^{-k}/\log y}) .
\end{split}	
\]
We conclude that
\begin{equation}\label{eq:tau sum over a}
\begin{split}
	\sum_{\substack{a\le \sqrt{x}\\ P^-(a)>y}} \frac{\tau_{k-1}(a)}{a} 
	&=V(y)\int_y^{\sqrt{x}} \sum_{j=0}^{k-2} d_{k-2,j}(y)  \bigg(\frac{\log w}{\log y}\bigg)^{k-2-j} \frac{\dee w}{w} \\
	&\quad + C_{k,y} + O_k(x^{1-2^{-k}/\log y}) ,
\end{split}
\end{equation}
where
\[
C_{k,y}:=1+\int_y^\infty \frac{R_{k-1,y}(w)}{w^2} \dee w \ll_k 1
\]

Finally, we estimate the sum over $b$. For any $\ell\in\Z_{\ge0}$, we have
\[
\sum_{\substack{b\le \sqrt{x} \\ P^-(b)>y}} \int_{\sqrt{x}}^{x/b} \bigg(\frac{\log w}{\log y}\bigg)^\ell  \dee w 
	= \int_{\sqrt{x}}^{x}\sum_{\substack{b\le x/w \\ P^-(b)>y}} \bigg(\frac{\log w}{\log y}\bigg)^\ell \dee w.
\]
If $w>x/y$, the conditions $b\le x/w$ and $P^-(b)>y$ imply that $b=1$. Otherwise, we have $x/w\ge y$. Using \eqref{eq:sieving tau k=1}, we find that
\begin{align*}
\sum_{\substack{b\le \sqrt{x} \\ P^-(b)>y}} \int_{\sqrt{x}}^{x/b} \bigg(\frac{\log w}{\log y}\bigg)^\ell  \dee w 
		&=
		 \int_{\sqrt{x}}^{x/y} \frac{V(y)x}{w}\bigg(\frac{\log w}{\log y}\bigg)^\ell  \dee w  
		+ \int_{\sqrt{x}}^{x/y} E_y(x/w) \bigg(\frac{\log w}{\log y}\bigg)^\ell\dee w \\
		&\qquad +  \int_{x/y}^x \bigg(\frac{\log w}{\log y}\bigg)^\ell \dee w  .
\end{align*}
We have
\begin{align*}
	\int_{\sqrt{x}}^{x/y} E_y(x/w) \bigg(\frac{\log w}{\log y}\bigg)^\ell\dee w
		&= x\int_{y}^{\sqrt{x}} E_y(u) \bigg(\frac{\log(x/u)}{\log y}\bigg)^\ell \frac{\dee u}{u^2}\\
		&= x\sum_{i=0}^\ell \binom{\ell}{i} \frac{(-1)^i (\log x)^{\ell-i}}{(\log y)^\ell} \int_{y}^{\sqrt{x}} \frac{E_y(u) (\log u)^i}{u^2} \dee u .
\end{align*}
Moreover, 
\[
\int_{\sqrt{x}}^\infty \frac{E_y(u) (\log u)^i}{u^2} \dee u \ll_j (\log x)^j x^{-2^{-1}/\log y},
\]
as well as
\[
e_i(y) := \frac{1}{(\log y)^i} \int_y^\infty \frac{E_y(u) (\log u)^i}{u^2} \dee u  \ll_i 1. 
\]
Therefore 
\[
	\int_{\sqrt{x}}^{x/y} E_y(x/w) \bigg(\frac{\log w}{\log y}\bigg)^\ell\dee w
	= x\sum_{i=0}^\ell \binom{\ell}{i} (-1)^i e_i(y) \bigg(\frac{\log x}{\log y}\bigg)^{\ell-i} + O_\ell\big( x^{-2^{-k}/\log y}\big),
\]
where we used that $k\ge2$. Lastly, 
\[
\int_{x/y}^x \bigg(\frac{\log w}{\log y}\bigg)^\ell \dee w
	= x\int_1^y \bigg(\frac{\log(x/u)}{\log y}\bigg)^\ell \frac{\dee u}{u^2}
	= x\sum_{i=0}^\ell \binom{\ell}{i} (-1)^i f_i(y)\bigg(\frac{\log x}{\log y}\bigg)^{\ell-i},
\]
where
\[
f_i(y):= \frac{1}{(\log y)^i} \int_1^y \frac{(\log u)^i}{u^2} \dee u \le \frac{i!}{(\log y)^i}\ll_i 1. 
\]
We conclude that
\begin{equation}\label{eq:tau sum over b}	
	\begin{split}
			\sum_{\substack{b\le \sqrt{x} \\ P^-(b)>y}} \int_{\sqrt{x}}^{x/b} \bigg(\frac{\log w}{\log y}\bigg)^\ell  \dee w 
		&=
		\int_{\sqrt{x}}^{x/y} \frac{V(y)x}{w}\bigg(\frac{\log w}{\log y}\bigg)^\ell  \dee w   \\
		&\quad +  x\sum_{i=0}^\ell \binom{\ell}{i} (-1)^i \big(e_i(y)+f_i(y)\big) \bigg(\frac{\log x}{\log y}\bigg)^{\ell-i} .
	\end{split}
\end{equation}

Combining \eqref{eq:tau hyperbola method}, \eqref{eq:tau sum over a} and \eqref{eq:tau sum over b}, we find that
\begin{align*}
	\sum_{\substack{n\le x \\ P^-(n)>y}} \tau_k(n)
	&= xV(y)^2
	\int_{y}^{x/y} 
	\sum_{j=0}^{k-2} d_{k-1,j}(y) \left(\frac{\log w}{\log y}\right)^{k-2-j} \frac{\dee w}{w} 
	+xV(y) C_{k,y} \\
	&\qquad + xV(y) \sum_{j=0}^{k-2} d_{k-1,j}(y) \sum_{i=0}^{k-2-j}(-1)^i\big(e_i(y)+f_i(y)\big) \bigg(\frac{\log x}{\log y}\bigg)^{\ell-i} \\
	&\qquad +  O_k\left(\frac{x^{1-2^{-k}/\log y}}{\log y}\right)  
\end{align*}
for all $x\ge y^2$. Since 
\[
\begin{split}
V(y)\int_y^{x/y}\bigg(\frac{\log w}{\log y}\bigg)^\ell\frac{\dee w}{w} 
&= \frac{V(y)(\log(x/y))^{\ell+1}}{(\log y)^\ell} - \frac{V(y)\log y}{\ell+1} \\
	&= \sum_{i=0}^{\ell+1} \binom{\ell+1}{i} \frac{(-1)^i V(y)\log y}{\ell+1} \bigg(\frac{\log x}{\log y}\bigg)^{\ell-i} 
	- \frac{V(y)\log y}{\ell+1} ,
\end{split}
\]
the inductive step is complete. 
\end{proof}

\begin{lemma}
	\label{lem:L(s,f)}
	Let $D\in\N$ and let $f$ be a multiplicative function such that $|\Lambda_f|\le D\Lambda$. For $z\ge y\ge 3$, we have
	\[
	\log|L_y(1+1/\log z,f)| = \sum_{y<p\le z}\frac{\Re(f(p))}{p} + O(1)
	\]
\end{lemma}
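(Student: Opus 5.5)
Here $L_y(s,f)$ should be read as the Dirichlet series of $f$ restricted to integers $n$ with $P^-(n)>y$, that is, the Euler product $\prod_{p>y}\sum_{k\ge0}f(p^k)p^{-ks}$. The approach is to write $\log L_y$ through $\Lambda_f$, split off the primes, and control the remaining pieces using only $|\Lambda_f|\le D\Lambda$.

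\textbf{Step 1: the logarithm.} Set $\sigma=1+1/\log z>1$. The identity $f\log=f*\Lambda_f$ gives, formally,
\[
\log L_y(s,f)=\sum_{P^-(n)>y}\frac{\Lambda_f(n)}{n^{s}\log n}.
\]
For $\Re(s)>1$ this series converges absolutely, since $|\Lambda_f(n)|/(n^\sigma\log n)\le D\Lambda(n)/(n^\sigma\log n)$. It therefore gives a genuine branch of the logarithm, and its real part equals $\log|L_y(\sigma,f)|$.

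\textbf{Step 2: primes versus higher prime powers.} Split the sum into primes $n=p$ and prime powers $n=p^k$ with $k\ge2$. At a prime, $\Lambda_f(p)=f(p)\log p$ (compare the coefficients of $p$ in $f\log=f*\Lambda_f$), so the prime terms contribute $\sum_{p>y}f(p)/p^{\sigma}$. The terms with $k\ge2$ are bounded by
\[
D\sum_p\sum_{k\ge2}\frac{1}{kp^{k}}\ll D.
\]
Hence
\[
\log|L_y(\sigma,f)|=\sum_{p>y}\frac{\Re f(p)}{p^{\sigma}}+O_D(1).
\]
The stated $O(1)$ should be read as depending on $D$.

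\textbf{Step 3: truncating at $z$.} We use $|f(p)|=|\Lambda_f(p)|/\log p\le D$ and compare $\sum_{p>y}\Re f(p)/p^\sigma$ with $\sum_{y<p\le z}\Re f(p)/p$.
\begin{itemize}
\item For $y<p\le z$, we have $1-p^{-1/\log z}\ll \log p/\log z$. The total error is therefore $\ll D\sum_{p\le z}\log p/(p\log z)\ll D$ by Mertens.
\item For $p>z$, the tail is $\le D\sum_{p>z}p^{-1-1/\log z}$. By partial summation with Chebyshev or Mertens this is $\ll D\int_{z}^\infty u^{-1-1/\log z}\,\dee u/\log u\ll D$. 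Concretely, substitute $u=z^t$: the integral becomes $\int_1^\infty z^{-t/\log z}\,\dee t/t=\int_1^\infty e^{-t}\,\dee t/t$, which is $O(1)$.
\end{itemize}

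\textbf{Main obstacle.} The only point needing care is the $p>z$ tail estimate, because the bound must be uniform in $y$ and $z$. The substitution $u=z^t$ handles this cleanly. Everything else is routine, so the lemma follows with an implied constant depending only on $D$.
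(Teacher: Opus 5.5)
Your proof is correct and follows the paper's argument: you expand $\log L_y$ through $\Lambda_f$, discard the prime powers with $k\ge2$ at a cost of $O_D(1)$, bound the tail by $\sum_{p>z}p^{-1-1/\log z}=O(1)$, and use $p^{-1/\log z}=1+O(\log p/\log z)$ for $y<p\le z$. You are also right that the implied constant depends on $D$, which matches the $O_D(1)$ in the paper's own proof.
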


\begin{proof}
This lemma is due to Granville and Soundararajan. We sketch the proof for completeness. We write 
\[
\log|L_y(1+\tfrac{1}{\log z},f)| =\sum_{p>y} \sum_{k\ge1} \frac{\Lambda_f(p^k)}{p^{k(1+\frac{1}{\log z})}\log(p^k)}. 
\]
The terms with $k\ge2$ contribute $O_D(1)$. The terms with $k=1$ and $p>z$ contribute also $O_D(1)$, because $\sum_{p>z}1/p^{1+1/\log z}=O(1)$. Finally, we write $p^{-1/\log z}=1+O(\log p/\log z)$ when $p\le z$ to complete the proof.
\end{proof}

\begin{lemma}\label{lem:sifted-f}
Let $D\in\N$, $A\ge D+1$, $\eta\in[0,1]$ and $Q\ge3$. Let $f:\N\to\C$ be a multiplicative function such that  $|\Lambda_f|\le D\Lambda$ and 
\[
\bigg| \sum_{n\le x} f(n) \bigg| \le x^{1-\eta/\log Q}\cdot \frac{(\log Q)^{A-D-1}}{(\log x)^A} . 
\]
There exists a constant $\alpha=\alpha(D)\in(0,1/2]$ such that
\[
\sum_{\substack{n\le x \\ P^-(n)>y}} f(n) \ll_{A,D} x^{1-\alpha\eta/\log y}\cdot \frac{(\log y)^{A-1}}{(\log x)^A} \quad\text{for}\ x\ge y \ge Q.
\]
\end{lemma}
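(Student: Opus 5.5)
We write $f_y(n)=f(n)1_{P^-(n)>y}$ and relate it to $f$ by a convolution. Let $g$ be the Dirichlet inverse of $f$, and let $g_y$ be the multiplicative function supported on $y$-smooth integers with $g_y(p^k)=g(p^k)$ for $p\le y$. Then $f_y = f * g_y$, because $L(s,f)\prod_{p\le y}L_p(s,f)^{-1}=L_y(s,f)$. Since $|g|\le\tau_D$, we have $|g_y|\le\tau_D\cdot 1_{P^+(n)\le y}$. Hence
\[
\sum_{\substack{n\le x\\ P^-(n)>y}} f(n) = \sum_{b\ P\text{-}y\text{-smooth}} g_y(b) S(x/b),\qquad S(u):=\sum_{n\le u}f(n).
\]
We split according to whether $b\le x/Q$ or $b>x/Q$. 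In the first range we insert the hypothesis $|S(x/b)|\le (x/b)^{1-\eta/\log Q}(\log Q)^{A-D-1}/(\log(x/b))^{A}$. In the second range we use the trivial bound $|S(u)|\le\sum_{n\le u}\tau_D(n)\ll u(\log 2u)^{D-1}$ with $u<Q$.

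The problem is that the hypothesis carries the weak saving $x^{-\eta/\log Q}$ and a power $(\log Q)^{A-D-1}$, while the target has $x^{-\alpha\eta/\log y}$ and $(\log y)^{A-1}$. Moreover, $\sum_{b\ y\text{-smooth}}\tau_D(b)/b\asymp(\log y)^D$, which costs $(\log y)^D$. This is exactly compensated: $(\log Q)^{A-D-1}(\log y)^D\le(\log y)^{A-1}$, using $A\ge D+1$ and $y\ge Q$.

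To control $(\log(x/b))^{-A}$ and obtain the exponential saving, we use Rankin's trick on the smooth $b$. For $b\le\sqrt{x}$ we have $\log(x/b)\ge\frac12\log x$ and $(x/b)^{1-\eta/\log Q}\le x^{1-\eta/\log Q}b^{-1+\eta/\log Q}$. Since $\eta/\log Q$ may be as large as $1/\log Q\ge1/\log y$, we need $\sum_{b\ y\text{-smooth}}\tau_D(b)b^{-1+\sigma}$ with $\sigma=\eta/\log Q$. This is fine when $\sigma\le c/\log y$, giving $\ll(\log y)^D$. When $Q$ is much smaller than $y$, we cannot use the full $\sigma$. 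Instead, we first weaken the hypothesis to $\sigma'=\alpha\eta/\log y\le \eta/\log Q$, which is legitimate since $x^{-\eta/\log Q}\le x^{-\sigma'}$. We then compute
\[
\sum_{P^+(b)\le y}\frac{\tau_D(b)}{b^{1-\sigma'}}\le\prod_{p\le y}(1-p^{\sigma'-1})^{-D}\ll (\log y)^D,
\]
since $p^{\sigma'}\le e^{\alpha}$. This produces $x^{1-\alpha\eta/\log y}(\log Q)^{A-D-1}(\log y)^D(\log x)^{-A}$, which is acceptable.

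For $b>\sqrt{x}$, including $b>x/Q$, we bound trivially. We use $|S(u)|\ll u(\log 2u)^{D-1}$, or the hypothesis when $u\ge Q$, giving $\sum_{b>\sqrt x,\ P^+(b)\le y}\tau_D(b)\,(x/b)(\log x)^{D-1}$. Rankin's trick with exponent $\delta=c/\log y$ then yields
\[
\sum_{b>\sqrt x} \frac{\tau_D(b)}{b}\le x^{-\delta/2}\sum_{P^+(b)\le y}\frac{\tau_D(b)}{b^{1-\delta}}\ll x^{-c/(2\log y)}(\log y)^D.
\]
So this part is $\ll x^{1-c/(2\log y)}(\log x)^{D-1}(\log y)^D$. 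The main obstacle is converting this to the shape $x^{1-\alpha\eta/\log y}(\log y)^{A-1}/(\log x)^A$, which requires absorbing $(\log x)^{A+D-1}(\log y)^{D-A+1}$ into the exponential. This is achieved by reserving half of the saving: write $u=\log x/\log y\ge1$, note $u^{A+D-1}e^{-cu/4}\ll_{A,D}1$, and choose $\alpha\le c/4$ (using $\eta\le1$). A similar absorption handles the boundary case $x/b<Q$, so that the hypothesis is applied only when $x/b\ge Q$. Choosing $\alpha=\alpha(D)$ small enough for all these steps completes the proof.
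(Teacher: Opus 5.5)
Your range $b\le\sqrt{x}$ is sound. It matches the paper's treatment of the terms with $ad'\le\sqrt{x}$: the factor $(\log y)^D$ from $\sum_{P^+(b)\le y}\tau_D(b)b^{-1+\sigma'}$ is paid for by $(\log Q)^{A-D-1}\le(\log y)^{A-D-1}$.

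\textbf{The gap is in the range $b>\sqrt{x}$.} There your bound is $x^{1-c/(2\log y)}(\log x)^{D-1}(\log y)^D$. With $u=\log x/\log y$, its ratio to the target is
\[
x^{-(c/2-\alpha\eta)/\log y}(\log x)^{A+D-1}(\log y)^{D-A+1}=e^{-(c/2-\alpha\eta)u}\,u^{A+D-1}(\log y)^{2D}.
\]
You correctly identified the factor $(\log x)^{A+D-1}(\log y)^{D-A+1}$, but it equals $u^{A+D-1}(\log y)^{2D}$, not $u^{A+D-1}$. The extra $(\log y)^{2D}$ is absorbed by the exponential only when $u\gg_D\log\log y$. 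For bounded $u$ you could fall back on the sieve bound $\sum_{n\le x,\,P^-(n)>y}\tau_D(n)\ll x(\log x)^{D-1}/(\log y)^D$; this is how the paper disposes of $x\le y^{(4D+1)/C}$. But that bound exceeds the target by a factor $u^{A+D-1}e^{\alpha\eta u}$. So in the window $K\le u\le C_D\log\log y$ neither estimate suffices, and the proof is incomplete there.

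This is not a weakness of Rankin's trick that a sharper smooth-number estimate would cure. The exact identity $f\cdot 1_{P^-(\cdot)>y}=f*g_y$, with $g_y$ supported on all $y$-smooth $b\le x$, is Legendre's sieve. Bounding $\sum_b g_y(b)S(x/b)$ term by term throws away the cancellation between $f$ and $g_y$. Even with a decay of the form $e^{-(1+o(1))(u/2)\log(u/2)}$ in place of $e^{-cu/2}$, the termwise bound still exceeds the target unless $u\log u\gg_D\log\log y$. The culprits are the $y$-smooth $b$ close to $x$, where $x/b$ is small and only the trivial bound for $S$ is available.

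\textbf{The missing idea is to truncate the sieve.} Following Sachpazis, the paper replaces $1_{P^-(n)>y}$ by upper and lower bound weights $1*\lambda^\pm$, with $\lambda^\pm$ supported on $d\le x^C$, $d\mid\prod_{p\le y}p$.
\begin{itemize}
\item By the Fundamental Lemma, this replacement costs $\ll x^{1-C/(2\log y)}/\log y+x^{1-C/2}(\log x)^{D-1}$, with no $(\log y)^{2D}$ loss.
\item Because $d\le x^C$, the analogue of your large-$b$ terms (those with $ad'>\sqrt{x}$, after writing $f\cdot 1_{(\cdot,dk_D)=1}=f*h_d$) gets a genuine power saving, $S_2\ll x^{5/6+C/3}(\log x)^{2D-1}$.
\item The terms with $ad'\le\sqrt{x}$ are handled exactly as in your small-$b$ range.
\end{itemize}
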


\begin{proof} When $D=1$, the needed estimate follows by Lemma 4.2 in \cite{dk-gafa}. For general $D$, we modify an argument of Sachpazis \cite[Proposition 4.3]{Sachpazis}. We indicate only the necessary changes. 

Let $c=c(D)$ be the constant appearing in Lemma 4.1 of \cite{Sachpazis}, and let $C=\min\{c/2,1/16\}$.  When $x\le y^{(4D+1)/C}$, we simply note that 
\[
\bigg|\sum_{\substack{n\le x \\ P^-(n)>y}} f(n)\bigg|\le \bigg|\sum_{\substack{n\le x \\ P^-(n)>y}} \tau_d(n)\bigg| \ll \frac{x(\log x)^{D-1}}{(\log y)^D} \asymp_{D,A} \frac{x^{1-\eta/\log y}(\log y)^{A-1}}{(\log x)^A},
\]	
where we used Theorem 14.2 in \cite{dk-book} to get the second inequality. 

Let us now assume that $x\ge y^{(4D+1)/C}$. Let $k_D=\prod_{p\le D^3}p$. By Lemma 3.8 in \cite{Sachpazis}, and by the proof of Proposition 4.3 in the same paper, there exist functions $\lambda^\pm$ supported on $\{d\le x^C : d|\prod_{p\le y}p\}$ such that $|\lambda^\pm|\le 1$, $(1*\lambda^-)(n)\le 1_{P^-(n)>y}\le (1*\lambda^+)(n)$ and 
\[
\sum_{\substack{n\le x \\ (n,k_D)=1}} (\lambda^+*1-\lambda^-*1)(n)|f(n)|\ll_D x^{1-C/2}(\log x)^{D-1}+ \frac{x^{1-C/(2\log y)}}{\log y}. 
\]
(To get the above estimate, see the first displayed equation on page 2953 of \cite{Sachpazis}, which must be corrected by also including the contribution of the error term from the third displayed equation on page 2952.) Thus
\begin{equation}
	\label{eq:reduce to S}
\sum_{\substack{n\le x \\ P^-(n)>y}} f(n) = S+  O_{D,A}\bigg( \frac{x^{1-C/(3\log y)} (\log y)^{A-1}}{(\log x)^A} \bigg) ,
\end{equation}
where
\[
S= \sum_{\substack{n\le x \\ (n,k_D)=1}}(\lambda^+*1)(n) f(n)  .
\]
Let us now see how to bound $S$.

We have
\[
S= \sum_{\substack{d\le x^C \\ (d,k_D)=1}} \lambda^+(d) \sum_{\substack{d'\le x,\ d|d' \\ p|d'\ \iff\ p|d}} f(d') \sum_{\substack{m\le  x/d' \\ (m,dk_D)=1}} f(m) . 
\]
We may write $f(m)1_{(m,dk_D)=1}=(f*h_d)(m)$, where $h_d$ the multiplicative such that $h_d(p^k)=1_{p|dk_D} g(p^k)$ with $g$ denoting the Dirichlet inverse of $f$. We have $|g|\le \tau_D$ because $|\Lambda_f|\le D\Lambda$ \cite[Lemma 2.2]{KS}. Thus
\[
S=  \sum_{\substack{d\le x^C \\ (d,k_D)=1}} \lambda^+(d) \sum_{\substack{d'\le x,\ d|d' \\ p|d'\ \iff\ p|d}} f(d') 
\sum_a h_d(a) \sum_{r\le  x/(ad')} f(r) .
\]
Taking absolute values, we find that
\begin{equation}
	\label{eq:S bound}
|S|\le  \sum_{\substack{d\le x^C \\ d|\prod_{p\le y}p}}  \sum_{\substack{d'\le x,\ d|d' \\ p|d'\ \iff\ p|d}} \tau_D(d') 
\sum_{\substack{a\in\N \\ p|a\ \Rightarrow p|k_Dd}} \tau_D(a) \bigg| \sum_{r\le  x/(ad')} f(r) \bigg| = S_1 + S_2, 
\end{equation}
where $S_1$ denotes the subsum with $ad'\le \sqrt{x}$ and $S_2$ the subsum with $ad'>\sqrt{x}$. 

We first bound $S_1$. When $ad'\le \sqrt{x}$, we have $x/(ad')\ge \sqrt{x} \ge y^{\frac{4D+1}{2C}} \ge Q$, and thus 
\[
\sum_{r\le  x/(ad')} f(r) \ll_D \frac{(x/ad')^{1-\eta/\log Q} (\log Q)^{A-D-1}}{(\log x)^A} \le \frac{x^{1-\eta/(2\log Q)}(\log Q)^{A-D-1}}{ad'(\log x)^A} .
\]
We have 
\[
\sum_{a:\, p|a\,\Rightarrow\,p|k_Dd} \frac{\tau_D(a)}{a}\ll_D (d/\phi(d))^D
\quad\text{and}\quad 
\sum_{d':\, p|d'\, \Leftrightarrow\, p|d} \frac{\tau_D(d')}{d'} \le \prod_{p|d} \sum_{k\ge1} \frac{\tau_D(p^k)}{p^k}.
\]
Thus
\begin{equation}
	\label{eq:S_1}
	S_1\ll_D \frac{x^{1-\eta/(2\log Q)}(\log Q)^{A-D-1}(\log y)^D}{(\log x)^A}\le  \frac{x^{1-\eta/(2\log y)} (\log y)^{A-1}}{(\log x)^A}  
\end{equation}
for all $y\ge Q$. 

Next, we bound $S_2$. We have the trivial bound 
\[
\sum_{r\le  x/(ad')} f(r) \ll \frac{x(\log x)^{D-1}}{ad'} \le \frac{x^{5/6}(\log x)^{D-1}}{(ad')^{2/3}} . 
\] 
whenever $\sqrt{x}<ad'\le x$. Moreover,
\[
\sum_{a:\, p|a\,\Rightarrow\,p|k_Dd} \frac{\tau_D(a)}{a^{2/3}}\ll_D \prod_{p|d}(1-1/p^{2/3})^{-D} 
\quad\text{and}\quad 
\sum_{d':\, p|d'\, \Leftrightarrow\, p|d} \frac{\tau_D(d')}{(d')^{2/3}} \le \prod_{p|d} \sum_{k\ge1} \frac{\tau_D(p^k)}{p^{2k/3}}.
\]
We conclude that
\[
S_2\ll_D x^{5/6}(\log x)^{D-1} \sum_{d\le x^C} \frac{\prod_{p|d}(1-1/p^{2/3})^{-D}}{d^{2/3}} .
\]
Thus
\begin{equation}
	\label{eq:S_2}
S_2\ll_D x^{C/3+5/6}(\log x)^{D-1} \sum_{d\le x^C} \frac{\prod_{p|d}(1-1/p^{2/3})^{-D}}{d} \ll x^{C/3+5/6}(\log x)^{2D-1} . 
\end{equation}
Since $|S|\le S_1+S_2$ by \eqref{eq:S bound}, relations \eqref{eq:reduce to S}, \eqref{eq:S_1} and \eqref{eq:S_2} imply that
\[
\sum_{\substack{n\le x \\ P^-(n)>y}} f(n) 
	\ll _D\frac{x^{1-\eta/(2\log y)}  (\log y)^{A-1}}{(\log x)^A} + x^{C/3+5/6}(\log x)^{2D-1} +  \frac{x^{1-C/(3\log y)} (\log y)^{A-1}}{(\log x)^A}. 
\]
This completes the proof of the lemma upon taking $\alpha=C/3$.
\end{proof}

\begin{lemma}\label{lem:L^{(j)}(1,f)}
Fix $D\in\N$ and $A>D+1$. Let $Q\ge3$ and let $f\in\CF_{A,D}(Q)$. 
\begin{enumerate}
	\item For $\sigma\ge1$, $y\ge Q$ and $j=0,1,\dots,D$, we have $L^{(j)}(\sigma,f)\ll_{A,D} (\log y)^j$. 
	\item For $z\ge y\ge Q$ and  $j=0,1,\dots,D$, we have 
\[
L_y^{(j)}(1,f) \ll_{A,D} (\log z)^j |L_y(1+1/\log z,f)| .
\]
\end{enumerate}
\end{lemma}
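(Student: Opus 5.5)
Throughout, $L_y(s,f)=\sum_{P^-(n)>y}f(n)n^{-s}$ denotes the Dirichlet series restricted to integers free of primes $\le y$, as in Lemma \ref{lem:L(s,f)}.

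The plan is to express $L^{(j)}$ and $L_y^{(j)}$ via partial summation, using \eqref{eq:f-small} for part (1) and Lemma \ref{lem:sifted-f} (with $\eta=0$) for the sifted version. For part (1), I write $L^{(j)}(\sigma,f)=(-1)^j\sum_n f(n)(\log n)^j n^{-\sigma}$ and split at $n=y$. For $n\le y$ I use the trivial bound $|f|\le\tau_D$, which gives
\[
\sum_{n\le y}\tau_D(n)(\log n)^j/n\ll(\log y)^{j+D},
\]
and this is too big. So instead I split at $Q$: the range $n\le Q$ gives $\ll(\log Q)^{j+D}$, which is still too big unless I exploit cancellation. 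Therefore I apply partial summation from $n=1$ using $S(x)=\sum_{n\le x}f(n)$:
\[
L^{(j)}(\sigma,f)=(-1)^{j}\int_{1}^\infty (\log x)^j x^{-\sigma}\,\dee S(x)=(-1)^{j+1}\int_1^\infty S(x)\,\frac{\dee}{\dee x}\big((\log x)^jx^{-\sigma}\big)\dee x .
\]
On $x\ge Q$, \eqref{eq:f-small} gives a contribution $\ll\int_Q^\infty(\log x)^{j-A}(\log Q)^{A-D-1}\dee x/x\ll(\log Q)^{j-D}$, since $A>D+1\ge j+1$. On $x\le Q$ the trivial bound $S(x)\ll x(\log x)^{D-1}$ gives $(\log Q)^{j+D}$. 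This shows the unsifted statement needs care, so I expect the intended route is to reduce (1) to (2)-type sifted statements: write $f=f_{\le y}*f_{>y}$, where $f_{\le y}$ is supported on $y$-smooth numbers and $f_{>y}$ on $y$-rough numbers. Then $L(s,f)=L^{(y)}(s,f)L_y(s,f)$ with $L^{(y)}(s,f)=\prod_{p\le y}(\cdots)$.

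For part (2) I argue directly. By Lemma \ref{lem:sifted-f} with $\eta=0$, $S_y(x):=\sum_{n\le x,P^-(n)>y}f(n)\ll x(\log y)^{A-1}/(\log x)^A$ for $x\ge y$, and $S_y(x)=1$ for $x<y$. Partial summation then gives
\[
\sum_{P^-(n)>y}f(n)(\log n)^jn^{-\sigma}\ll(\log y)^j \quad\text{uniformly for } \sigma\ge1 ,
\]
since $\int_y^\infty(\log x)^{j-1-A}(\log y)^{A-1}\dee x/x\ll(\log y)^{j-1}$ (when $j\le D<A-1$), together with the boundary term. Hence $L_y^{(j)}(\sigma,f)\ll(\log y)^j$. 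To get the sharper bound (2), I Taylor expand around $s_0=1+1/\log z$. Set $\delta=1/\log z$. The function $F(s)=L_y(s,f)$ is analytic on $\Re s>1$ with derivatives $F^{(k)}\ll(\log y)^k$ there. A Cauchy-type estimate alone does not produce the factor $|F(s_0)|$, so instead I use the structure: $F(s)=\exp(G(s))$ where $G(s)=\sum_{P^-(n)>y}\Lambda_f(n)/(n^s\log n)$, and $G^{(k)}(s)\ll_D \sum_{p>y}(\log p)^{k-1}p^{-\sigma}\ll(\log z)^{k-1}$ for $\sigma\ge 1+1/\log z$. This gives $F^{(j)}(s_0)\ll(\log z)^j|F(s_0)|$ via Faà di Bruno. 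The remaining step is to transfer from $s_0$ to $1$. For this, I write
\[
F^{(j)}(1)=\sum_{k=0}^{D-j}\frac{(-\delta)^k}{k!}F^{(j+k)}(s_0)+\frac{(-\delta)^{D-j+1}}{(D-j)!}\int_0^1(1-u)^{D-j}F^{(D+1)}(s_0-u\delta)\,\dee u .
\]
The first terms are $\ll(\log z)^j|F(s_0)|$.

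The main obstacle is the remainder, which needs $F^{(D+1)}$ on $[1,s_0]$ to be small relative to $|F(s_0)|(\log z)^{D+1}$. I would control it by a variant of Lemma \ref{lem:sifted-f} using $A>D+1$: the sifted partial-sum bound makes $\sum_{P^-(n)>y} f(n)(\log n)^{D+1}n^{-\sigma}$ converge uniformly. The lower bound on $|F(s_0)|$ needed to absorb it would come from Lemma \ref{lem:L(s,f)} together with $\Re f(p)\ge -D$, which give $|F(s_0)|\gg(\log y/\log z)^{D}$. I would then aim to show the remainder is $\ll(\log z)^{j}(\log y/\log z)^{D}$ by partial summation, splitting $n$ at $z$: for $n\le z$, $n^{-\sigma}$ and $n^{-s_0}$ agree up to constants, and for $n>z$ the sifted bound supplies the decay $(\log y)^{A-1}/(\log n)^{A}$. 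Part (1) would then follow from (2) with $z=y$ combined with the Euler factor over $p\le y$, whose derivatives cost $(\log y)^k$ each.
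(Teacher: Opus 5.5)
Your sifted treatment of part (1) is exactly the paper's proof: Lemma \ref{lem:sifted-f} with $\eta=0$ plus partial summation gives $L_y^{(j)}(\sigma,f)\ll(\log y)^j$ for $\sigma\ge1$. That sifted bound is what (1) means; the paper proves it through sifted sums and then applies it to $L_z$ in part (2).

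Your closing claim that the unsifted bound follows from (2) and the Euler factor over $p\le y$ is wrong. That factor at $\sigma=1$ has size $\asymp_D\exp(\sum_{p\le y}\Re f(p)/p)$, which is not bounded in general. In fact the unsifted bound is false: a primitive character $\chi$ mod $q$ lies in $\CF_{A,1}(q)$ for $q$ large by P\'olya--Vinogradov, yet $L(1,\chi)$ can be $\gg\log\log q$.

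For part (2) you take a genuinely different route, but the remainder step fails as written. With only $A>D+1$, partial summation against $\sum_{n\le x,\,P^-(n)>y}f(n)\ll x(\log y)^{A-1}/(\log x)^A$ bounds $\sum_{P^-(n)>y}f(n)(\log n)^k n^{-\sigma}$ uniformly in $\sigma\ge1$ only when $k<A-1$. For $k=D+1$ and $D+1<A\le D+2$ the integral $\int^\infty(\log x)^{D+1-A}\,\dee x/x$ diverges. For $A<D+2$ all you get is $F^{(D+1)}(1+\epsilon)\ll(\log y)^{A-1}\epsilon^{A-D-2}$, coming from $n>z$, so splitting at $z$ does not help. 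The claimed uniform convergence on $[1,s_0]$ is therefore not available.

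The singularity is integrable against $(1-u)^{D-j}$, so your remainder could be rescued. The simpler fix is to stop the expansion one order earlier. For $j<D$,
\[
F^{(j)}(1)=\sum_{k=0}^{D-j-1}\frac{(-\delta)^k}{k!}F^{(j+k)}(s_0)+O\Big(\delta^{D-j}\sup_{1\le\sigma\le s_0}|F^{(D)}(\sigma)|\Big),
\]
and the error is $\ll(\log z)^{j}(\log y/\log z)^D$ by the sifted bound from part (1). Your lower bound $|F(s_0)|\gg(\log y/\log z)^D$ absorbs this exactly, and for $j=D$ no expansion is needed. Also, the correct bound is $G^{(k)}(s_0)\ll(\log z)^k$, not $(\log z)^{k-1}$; your Fa\`a di Bruno conclusion requires the former.

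With that repair your argument is complete, and it differs from the paper's as follows. The paper factors $L_y(s,f)=F(s)L_z(s,f)$ with $F$ the \emph{finite} Euler product over $y<p\le z$. It then:
\begin{enumerate}
\item applies Fa\`a di Bruno to $F$ directly at $s=1$, where $(F'/F)^{(i-1)}(1)\ll(\log z)^i$ trivially;
\item bounds $L_z^{(i)}(1,f)\ll(\log z)^i$ by part (1) with $z$ in place of $y$;
\item finishes with $|F(1)|\asymp|F(1+1/\log z)|\asymp|L_y(1+1/\log z,f)|$.
\end{enumerate}
This avoids both the Taylor transfer from $s_0$ to $1$ and any lower bound for $|L_y(s_0,f)|$. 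Your version works where everything converges absolutely, and pays for the move to $s=1$ with the one-sided bound $\Re f(p)\ge -D$. That works because the exponent $D$ in the lower bound matches exactly the number of derivatives part (1) controls.
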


\begin{proof}(a) This follows readily by Lemma \ref{lem:sifted-f} and partial summation.
	
	\medskip

	(b) We write $L_y(s,f)=F(s)L_z(s,f)$. First of all, we know that $L_z^{(i)}(\sigma,f)\ll_{A,D} (\log z)^i$ for $i=0,1,\dots,j$ by part (a). Moreover, we have $F=e^G$ with $G=\log F$. Hence, Fa\`a di Bruno's formula implies the identity
	\[
	\frac{F^{(\ell)}(s)}{F(s)} = j! \sum_{a_1+2a_2+\cdots+\ell a_\ell=\ell} \prod_{i=1}^\ell \bigg\{\frac{1}{a_i!}	\left(\frac{-F'}{i!F}(s)\right)^{a_i} \bigg\}.
	\]
	Since
	\[
	\left(\frac{-F'}{F}\right)^{(i-1)}(\sigma) = \sum_{p|n\ \Rightarrow\ y<p\le z} \frac{\Lambda_f(n)(-\log n)^{i-1}}{n^\sigma}
	\ll (\log z)^i
	\]
	for all $i\ge1$	by our assumption that $|\Lambda_f|\le D\cdot \Lambda$, we conclude that $(F^{(\ell)}/F)(\sigma)\ll_\ell(\log z)^\ell$ for all $\ell\ge1$. Putting together the above estimates implies that
	\[
	L_y^{(j)}(1,f) \ll_{A,D} (\log z)^j |F(1+1/\log z,f)| .
	\]
	Lastly, arguing as in Lemma \ref{lem:L(s,f)}, we find that
	\[
	|F(1+1/\log z)|\asymp_D |L_y(1+1/\log z,f)|.
	\]
	This completes the proof.
\end{proof}

\begin{lemma}\label{lem:sieving tau_k*f}
	Fix $D\in\N$, $k\in\{0,1,\dots,D\}$ and $A>D+1$. Let $Q\ge3$ and $f\in\CF_{A,D}(Q)$. For each $y\ge Q$, there exists real numbers $c_{k,0}(y,f),c_{k,1}(y,f),\dots,c_{k,k-1}(y,f)$ such that 
	\begin{equation}
		\label{eq:c_{k,j}(y,f)}
		c_{k,j}(y,f) \ll_{A,D} \frac{1}{(\log y)^{k-j}} \sum_{i=0}^j \frac{|L_y^{(i)}(1,f)|}{(\log y)^i}
	\end{equation}
	and 
	\[
	\sum_{\substack{n\le x \\ P^-(n)>y}} (\tau_k*f)(n)
	= \int_0^x \sum_{j=0}^{k-1} c_{k,j}(y,f)(\log w)^{k-1-j} \dee w
	+ O_{A,D}\bigg( \frac{x(\log y)^{A-k-1}}{(\log x)^{A-k}} \bigg)
	\]
	uniformly for all $x\ge y$. In particular, if $L(s,f)$ has a zero of multiplicity $m$ at $s=1$, then $c_{k,j}(y,f)=0$ for all $j<m$. 
\end{lemma}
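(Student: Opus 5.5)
Write $f_y:=f\cdot 1_{P^-(\cdot)>y}$ and $\tau_{k,y}:=\tau_k\cdot 1_{P^-(\cdot)>y}$. Since the condition $P^-(n)>y$ is multiplicative, we have $(\tau_k*f)\cdot 1_{P^-(\cdot)>y}=\tau_{k,y}*f_y$. The plan is to run Dirichlet's hyperbola method on this convolution, using two inputs: Lemma~\ref{lem:sieving tau} gives an asymptotic with a power-saving error for the partial sums of $\tau_{k,y}$, and Lemma~\ref{lem:sifted-f} with $\eta=0$ gives the bound
\[
\sum_{n\le u,\,P^-(n)>y}f(n)\ll \frac{u(\log y)^{A-1}}{(\log u)^A}\qquad(u\ge y).
\]
The case $k=0$ is just Lemma~\ref{lem:sifted-f}, so we assume $k\ge1$.

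The split is into the ranges $a\le x/\sqrt{x}$ and $b\le\sqrt{x}$, with $a$ carrying $\tau_{k,y}$ and $b$ carrying $f_y$; the range $x<y^2$ is handled trivially as in Lemma~\ref{lem:sieving tau}.

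\emph{Long range.} Here we sum $f_y(b)\sum_{a\le x/b}\tau_{k,y}(a)$ over $b\le\sqrt{x}$ and insert Lemma~\ref{lem:sieving tau}. The error term contributes
\[
\sum_{b\le\sqrt{x}}|f_y(b)|\,\frac{(x/b)^{1-2^{-k}/\log y}}{\log y},
\]
which is $\ll x(\log y)^{D-1}\cdot x^{-c/\log y}$. This is acceptable once $\log x\gg \log y\log\log x$, and for smaller $x$ we fall back on crude bounds. I expect this to need care but nothing more.

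\emph{Main term.} The main term has the form
\[
xV(y)\sum_j c_{k,j}(y)\sum_{b\le\sqrt{x}}\frac{f_y(b)}{b}\Big(\frac{\log(x/b)}{\log y}\Big)^{k-1-j}.
\]
Expand $\log(x/b)$ binomially. Partial sums of $f_y(b)(\log b)^i/b$ up to $\sqrt{x}$ equal $(-1)^iL_y^{(i)}(1,f)$ minus a tail. By partial summation from Lemma~\ref{lem:sifted-f}, this tail is $\ll(\log y)^{A-1}(\log x)^{i+1-A}$.

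Since $V(y)\asymp1/\log y$, the tail contributes
\[
x(\log x)^{k-1-j-i}(\log y)^{-(k-j-i)}(\log y)^{A-1}(\log x)^{i+1-A}\le x(\log y)^{A-k-1}/(\log x)^{A-k}.
\]
This is exactly the claimed error. The surviving terms are polynomials in $\log x$ of degree $\le k-1$ whose coefficients are combinations of $V(y)c_{k,j}(y)(\log y)^{-(k-1-j)}L_y^{(i)}(1,f)$. A coefficient of $(\log x)^{k-1-j}$ thus involves only $L_y^{(i)}$ with $i\le j$, with weight $(\log y)^{-(k-j)-i}$ after accounting for $V(y)$. This is exactly \eqref{eq:c_{k,j}(y,f)} once we rewrite $x(\log x)^\ell$ as $\int_0^x$ of a polynomial in $\log w$. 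That rewriting is a triangular change of basis, which respects the index bound since lower powers only mix into higher $j$.

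\emph{Short range.} Here we sum $\tau_{k,y}(a)\sum_{\sqrt{x}<b\le x/a}f_y(b)$ over $a\le\sqrt{x}$. Lemma~\ref{lem:sifted-f} gives
\[
\sum_{a\le\sqrt{x}}\tau_{k,y}(a)\,\frac{x}{a}\,\frac{(\log y)^{A-1}}{(\log x)^A}\ll\frac{x(\log y)^{A-1}}{(\log x)^A}\Big(\frac{\log x}{\log y}\Big)^{k},
\]
using $\sum_{a\le\sqrt{x},\,P^-(a)>y}\tau_k(a)/a\ll(\log x/\log y)^k$. Similarly, the part of $\sum_{b\le\sqrt{x}}f_y(b)$ subtracted in the hyperbola identity is bounded the same way. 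Both are within the error.

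\emph{Vanishing of low coefficients.} If $L(s,f)$ vanishes to order $m$ at $1$, then $L_y(s,f)=L(s,f)\prod_{p\le y}(\text{local factor})^{-1}$ also vanishes to order $m$, because the local factors are holomorphic and nonzero near $s=1$ by $|\Lambda_f|\le D\Lambda$. Hence $L_y^{(i)}(1,f)=0$ for $i<m$. Since the coefficient $c_{k,j}(y,f)$ is a linear combination of $L_y^{(i)}(1,f)$ with $i\le j$, it vanishes for $j<m$.

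\emph{Main obstacle.} The hardest step will be the bookkeeping that shows each $c_{k,j}$ depends only on $L_y^{(i)}$ with $i\le j$, together with controlling the $x^{-c/\log y}$ error in the intermediate range $y^2\le x\le y^{O(\log\log x)}$. For the latter I would first try absorbing it into the stated error using $A>D+1$ and direct bounds.
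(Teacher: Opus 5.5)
Your proposal follows the paper's proof. It uses the same hyperbola split: sifted $\tau_k$ over the long range via Lemma~\ref{lem:sieving tau}, and sifted $f$ over the short range via Lemma~\ref{lem:sifted-f} with $\eta=0$. It then expands $\log(x/b)$ binomially to produce $L_y^{(i)}(1,f)$, bounds the tails by partial summation (using $A>i+1$), and applies the triangular change of basis $x(\log x)^\ell=\int_0^x\big((\log w)^\ell+\ell(\log w)^{\ell-1}\big)\dee w$. Two corrections are needed.

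First, the ``main obstacle'' you flag is not real. It comes from bounding $\sum_{b\le\sqrt{x}}|f_y(b)|/b$ by $(\log x)^D$, which throws away the sieve condition. Since $|f|\le\tau_D$, Mertens' theorem gives
\[
\sum_{b\le\sqrt{x},\,P^-(b)>y}\frac{\tau_D(b)}{b}\le\prod_{y<p\le\sqrt{x}}(1-1/p)^{-D}\ll\Big(\frac{\log x}{\log y}\Big)^D,
\]
which is the same bound you already use in the short range. Also $(x/b)^{-2^{-k}/\log y}\le x^{-2^{-k-1}/\log y}$ for $b\le\sqrt{x}$. Writing $u=\log x/\log y\ge 2$, the error inherited from Lemma~\ref{lem:sieving tau} is therefore
\[
\ll \frac{x\,u^D e^{-2^{-k-1}u}}{\log y}\ll_{A,D}\frac{x\,u^{-(A-k)}}{\log y}=\frac{x(\log y)^{A-k-1}}{(\log x)^{A-k}},
\]
uniformly for all $x\ge y^2$. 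No intermediate range needs separate treatment. This correction is necessary, not cosmetic: with your lossy bound, the fallback to crude estimates would only cover bounded $u$, not $u$ up to $\log\log y$.

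Second, in your tail estimate the factor coming from $V(y)(\log y)^{-(k-1-j)}$ is $(\log y)^{-(k-j)}$, not $(\log y)^{-(k-j-i)}$. With the exponent you wrote, the claimed inequality can fail when $i\ge1$. With the correct exponent, the tail contribution is the target bound times $(\log y/\log x)^j\le 1$.

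The rest is fine: the $k=0$ case, the trivial range $x<y^2$, the short-range bound with $(\log x/\log y)^k$, the index bookkeeping for \eqref{eq:c_{k,j}(y,f)}, and the vanishing of $L_y^{(i)}(1,f)$ for $i<m$.
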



\begin{proof} The result follows by a routine application of Dirichlet's hyperbola method, using Lemmas \ref{lem:sifted-f} and \ref{lem:sieving tau}. We give the details for completeness.
	
	Note that $x(\log x)^{k-j-1} = \int_0^x (\log w)^{k-j-1} \dee w +1_{j\le k-2}\cdot (k-j-1) \int_0^x (\log w)^{k-j-2}\dee w$ for $j=0,1,\dots,k-1$. Moreover, the right-hand side of \eqref{eq:c_{k,j}(y,f)} is $\ll_{A,D} 1/(\log y)^{k-j}$ by Lemma \ref{lem:L^{(j)}(1,f)}(a). Thus it suffices to show that 
	there are real numbers $d_{k,0}(y,f),d_{k,1}(y,f),\dots,d_{k,k-1}(y,f)$ such that 
	\begin{equation}
		\label{eq:d_{k,j}(y,f) bound}
		d_{k,j}(y,f) \ll_{A,D} \frac{1}{(\log y)^{k-i}} \sum_{i=0}^j \frac{|L_y^{(i)}(1,f)|}{(\log y)^i}
	\end{equation}
	and 
	\begin{equation}
		\label{eq:tau_k*f goal}
		\sum_{\substack{n\le x \\ P^-(n)>y}} (\tau_k*f)(n)
		= x\sum_{j=0}^{k-1} d_{k,j}(y,f)(\log x)^{k-1-j} 
		+ O_{A,D}\bigg( \frac{x(\log y)^{A-k-1}}{(\log x)^{A-k}} \bigg)
	\end{equation}
	uniformly for all $x\ge y$. Indeed, the lemma would then follow with
	\[
	c_{k,j}(y,f) = d_{k,j}(f) + 1_{j\ge1} \cdot (k-j) d_{k,j-1}   \ll_{A,D} \frac{1}{(\log y)^{k-j}} \sum_{i=0}^j \frac{|L_y^{(i)}(1,f)|}{(\log y)^i} .
	\]
	
	Let us now prove \eqref{eq:tau_k*f goal}. Recall Lemma \ref{lem:sieving tau} and the real numbers $c_{k,j}(y)\ll_k1$ in its statement. We then set
		\begin{equation}
		\label{eq:tau_k coeff bound}
		\tilde{c}_{k,\ell}(y) = \frac{c_{k,j}(y)\prod_{p\le y}(1-1/p)}{(\log y)^{k-\ell-1}} \ll \frac{1}{(\log y)^{k-\ell}} 
	\end{equation}
	and
		\begin{equation}
		\label{eq:d_{k,j}(y,f) definition}
	d_{k,j}(y,f) = \sum_{\ell=0}^j \binom{k-j+i-1}{i}  \tilde{c}_{k,j-i}(y) L^{(i)}(1,f),
	\end{equation}
	which satisfies \eqref{eq:d_{k,j}(y,f) bound} by \eqref{eq:tau_k coeff bound}. We will prove \eqref{eq:tau_k*f goal} holds with this choice of $d_{k,j}(y,f)$. 
	
	First of all, when $x\le y^2$, then conditions $n\le x$ and $P^-(n)>y$ imply that $n\in\{1\}\cup\{y<p\le x\}$. Hence, \eqref{eq:tau_k*f goal} follows in this case by Chebyshev's estimate. 
	
	Let us now assume that $x\ge y^2$. We have
	\begin{align*}
		\sum_{\substack{n\le x \\ P^-(n)>y}} (\tau_k*f)(n)
		&=\sum_{\substack{a\le \sqrt{x} \\ P^-(b)>y}} f(a) \sum_{\substack{b\le x/a \\ P^-(b)>y}} \tau_k(b) 
		+ \sum_{\substack{b\le \sqrt{x}\\ P^-(b)>y}} \tau_k(b) \sum_{\substack{\sqrt{x}<a\le x/b \\P^-(a)>y}} f(a).
	\end{align*}
	The rightmost double sum is $\ll x(\log y)^{A-D-1}/(\log x)^{A-D}$ by Lemma \ref{lem:sifted-f}. To estimate the first double sum on the right-hand side, we use Lemma \ref{lem:sieving tau}. Recalling the definition of $\tilde{c}_{k,\ell}(y)$ from \eqref{eq:tau_k coeff bound}, we find that
	\begin{equation}\label{eq:tau_k*f hyperbola method}	
		\begin{split}
			\sum_{\substack{n\le x \\ P^-(n)>y}} (\tau_k*f)(n)
			&= x\sum_{\ell=0}^{k-1} \tilde{c}_{k,\ell}(y) \sum_{\substack{a\le \sqrt{x} \\ P^-(a)>y}}  \frac{f(a)(\log(x/a))^{k-\ell-1}}{a} \\
			&\qquad +O_{A,D}\bigg(\frac{x(\log y)^{A-k-1}}{(\log x)^{A-k}}\bigg) .
		\end{split}
	\end{equation}
	By Lemma \ref{lem:sifted-f} and partial summation, we have
	\[
	\sum_{\substack{a> \sqrt{x} \\ P^-(a)>y}}  \frac{f(a)(\log(x/a))^{k-\ell-1}}{a} \ll_{A,D} (\log x)^{k-\ell-1} \cdot \bigg(\frac{\log y}{\log x}\bigg)^{A-1} ,
	\]
	where we used that $A>D+1\ge (k-\ell-1)+2$. Together with \eqref{eq:tau_k*f hyperbola method} and \eqref{eq:tau_k coeff bound}, this implies 
	\[
	\begin{split}
		\sum_{\substack{n\le x \\ P^-(n)>y}} (\tau_k*f)(n)
		&= x\sum_{\ell=0}^{k-1} \tilde{c}_{k,\ell}(y) \sum_{P^-(a)>y}  \frac{f(a)(\log(x/a))^{k-\ell-1}}{a} +O_{A,D}\bigg(\frac{x(\log y)^{A-k-1}}{(\log x)^{A-k}}\bigg) .
	\end{split}
	\]
	Finally, using the binomial theorem, we find that 
	\[
	(\log(x/a))^{k-\ell-1} = \sum_{\substack{0\le i \le j\le k-1 \\  j=i+\ell}} \binom{k-\ell-1}{i} (\log x)^{k-j-1} (-\log a)^i .
	\]
	Using \eqref{eq:d_{k,j}(y,f) definition}, we find that
	\begin{align*}
		\sum_{\ell=0}^{k-1} \tilde{c}_{k,\ell}(y) \sum_{P^-(a)>y}  \frac{f(a)(\log(x/a))^{k-\ell-1}}{a}  = \sum_{j=0}^{k-1} d_{k,j}(y,f)(\log x)^{k-j-1} .
	\end{align*}
This completes the proof of \eqref{eq:tau_k*f goal}, and thus of the lemma.
\end{proof}

\begin{lemma}\label{lem:tau_m*f}
Fix $D\in\N$ and $A>D+1$. Let $y\ge Q\ge3$ and $f\in\CF_{A,D}(Q)$, and assume that $L(s,f)$ has a zero of multiplicity $m\le D$ at $s=1$. 
\begin{enumerate}
\item For $\sigma\ge1$ and $j\in\{0,1,\dots,D-m\}$, we have
\[
L_y^{(j)}(\sigma,\tau_m*f) \ll_{A,D} (\log y)^j  .
\]
\item For $\sigma\ge1$, we have $L_y(1,\tau_m*f)\ll |L_y(\sigma,\tau_m*f)|$. 
\item Assume that $m\le D-1$, and let $k\in\{m,\dots,D-1\}$. Uniformly for $z\ge y\ge Q$ and $\sigma\in[1+1/\log z,1+1/\log y]$, we have
\[
L_y'(\sigma,\tau_k*f) \ll_{A,D}\frac{|L_y(1+1/\log z,\tau_k*f)|}{(\sigma-1)^2\log z} + \log y . 
\]
\end{enumerate}
\end{lemma}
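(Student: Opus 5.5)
Throughout, I write $L_y(s,h)=\sum_{P^-(n)>y}h(n)/n^s$ (this is the notation implicit in Lemma \ref{lem:L(s,f)}). The plan is to read all three parts off the asymptotic formula of Lemma \ref{lem:sieving tau_k*f}, converted into Dirichlet-series information by partial summation.

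\textbf{Part (a).} Apply Lemma \ref{lem:sieving tau_k*f} with $k=m$. Since $L(s,f)$ vanishes to order $m$ at $s=1$, we have $c_{m,j}(y,f)=0$ for all $j<m$, so the main term disappears and
\[
\sum_{\substack{n\le x \\ P^-(n)>y}} (\tau_m*f)(n) \ll_{A,D} \frac{x(\log y)^{A-m-1}}{(\log x)^{A-m}} \quad (x\ge y).
\]
(When $m=0$ this is just Lemma \ref{lem:sifted-f} with $\eta=0$.) Write
\[
L_y^{(j)}(\sigma,\tau_m*f)=(-1)^j\int_{y^-}^\infty (\log w)^j w^{-\sigma}\,\dee S(w),
\]
where $S(w)$ is the sifted partial sum above, and integrate by parts. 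The boundary terms vanish, and the integral is bounded by
\[
\int_y^\infty \frac{(\log w)^{j}}{w}\cdot \frac{(\log y)^{A-m-1}}{(\log w)^{A-m}}\,\dee w
\]
plus similar terms with lower powers of $\log w$. This integral converges precisely because $A-m-j>1$, which is guaranteed by $j\le D-m$ and $A>D+1$. It evaluates to $\ll (\log y)^j$.

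\textbf{Part (b).} The same partial-summation representation gives
\[
L_y(1,\tau_m*f)-L_y(\sigma,\tau_m*f)=\int_y^\infty S(w)\,\dee\big(w^{-1}-w^{-\sigma}\big).
\]
I expect a direct bound here to be too weak, so I would instead factor
\[
L_y(s,\tau_m*f)=\zeta_y(s)^m L_y(s,f)
\]
and use analyticity. Near $s=1$, $L_y(s,f)$ behaves like $(s-1)^m G(s)$ while $\zeta_y(s)^m\asymp (s-1)^{-m}$. Hence $L_y(\sigma,\tau_m*f)=G(\sigma)(1+O((\sigma-1)\log y))$ in the range $\sigma-1\le 1/\log y$, where $G(\sigma)$ is essentially $L_y^{(m)}(\sigma,f)/m!$. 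Comparing $G(1)$ with $G(\sigma)$ via the derivative bounds of part (a) yields $L_y(1)\approx L_y(\sigma)$ in that range. For $\sigma\ge 1+1/\log y$, I would use Lemma \ref{lem:L(s,f)}-type monotonicity: the Euler product over $p>y$ changes $\log|L|$ by only $O(1)$ between $1+1/\log y$ and $\sigma$, up to the $\sum \Re f(p)/p$ term. This step is the main obstacle. The honest route is probably to show $L_y(1,\tau_m*f)=L_y(1+1/\log z,\tau_m*f)\cdot e^{O(1)}$ times a factor controlled by $\sum_{y<p\le z}(\Re f(p)+m)/p$, and I would revisit the exact form of the claim at this point.

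\textbf{Part (c).} Take $k\in\{m,\dots,D-1\}$ and write $L_y(s,\tau_k*f)=F(s)L_z(s,\tau_k*f)$, where $F$ is the Euler product over $y<p\le z$, exactly as in Lemma \ref{lem:L^{(j)}(1,f)}(b). Differentiating gives
\[
L_y'=F'L_z+FL_z'.
\]
The $L_z$ factors are handled by part (a)-type bounds at level $z$, i.e.\ Lemma \ref{lem:sieving tau_k*f} applied to $\tau_k*f$ with $y$ replaced by $z$. The logarithmic derivative satisfies
\[
\frac{F'}{F}(\sigma)\ll \sum_{y<p\le z}\frac{\log p}{p^\sigma}\ll \frac{1}{\sigma-1}.
\]
For $|F(\sigma)|$ I would use Lemma \ref{lem:L(s,f)} to compare it with $|F(1+1/\log z)|$: the primes between $e^{1/(\sigma-1)}$ and $z$ contribute at most $(D+k)\log((\sigma-1)\log z)$. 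This gives the factor $((\sigma-1)\log z)^{-1}$ appearing in the bound. The $\log y$ term arises from the part where $L_z$ is replaced by its main term at $\sigma$ near $1+1/\log y$.
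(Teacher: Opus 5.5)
Your part (a) is correct and is the paper's argument. Part (b), however, is not proved. You stop at the obstacle, and the route you sketch cannot get past it. Comparing values at $1$ and at $\sigma$ through derivative bounds gives only additive information, such as $|L_y(1,g)-L_y(\sigma,g)|\ll(\sigma-1)\log y$ for $g=\tau_m*f$ (when $m<D$). That is useless exactly when $|L_y(\sigma,g)|$ is small, which is the case that matters in the proof of Theorem \ref{thm:main theorem}.

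The missing idea is a one-sided monotonicity argument. For $\sigma\ge1+1/\log y$ one has $|L_y(\sigma,g)|\asymp1$ (Euler product over $p>y$), while $L_y(1,g)\ll1$ by (a). For $\sigma=1+1/\log z$ with $z\ge y$, take $x\ge z$ and apply Lemma \ref{lem:L(s,f)} to $g$ three times (allowed, since $|\Lambda_g|\le(m+D)\Lambda$):
\[
\log|L_y(1+\tfrac{1}{\log x},g)|=\log|L_y(1+\tfrac{1}{\log z},g)|+\log|L_z(1+\tfrac{1}{\log x},g)|+O(1).
\]
Part (a) at level $z\ge Q$ gives $\log|L_z(1+1/\log x,g)|\le O(1)$. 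Letting $x\to\infty$ yields $|L_y(1,g)|\ll|L_y(\sigma,g)|$. So the relevant factor comes from the primes $p>z$, not from $y<p\le z$, and it is bounded above by (a) itself. The claim is fine as stated.

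In part (c), the step ``this gives the factor $((\sigma-1)\log z)^{-1}$'' is wrong. With $\sigma=1+1/\log w$, Lemma \ref{lem:L(s,f)} gives
\[
\log\frac{|F(\sigma)|}{|F(1+1/\log z)|}=-\sum_{w<p\le z}\frac{k+\Re(f(p))}{p}+O(1)\le(D-k)\log\big((\sigma-1)\log z\big)+O(1).
\]
So all you get is $|F(\sigma)|\ll((\sigma-1)\log z)^{D-k}|F(1+1/\log z)|$, a loss rather than a saving. Combined with $F'/F,\,L_z'/L_z\ll1/(\sigma-1)$ and $|L_z(\sigma,\tau_k*f)|\asymp1$, your route yields only
\[
L_y'(\sigma,\tau_k*f)\ll\frac{((\sigma-1)\log z)^{D-k}\,|L_y(1+1/\log z,\tau_k*f)|}{\sigma-1}.
\]
Its main term exceeds the claimed one by a factor up to $(\log z/\log y)^{D-k+1}$. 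Integrated over $\sigma$ as in the proof of Theorem \ref{thm:main theorem}, it no longer gives $O(|L_y(1+1/\log Y_k,\tau_k*g)|)$. The factorisation works in Lemma \ref{lem:L^{(j)}(1,f)}(b) only because there $F$ is evaluated at $s=1$, within $1/\log z$ of $1+1/\log z$.

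Bounds of the form $|L_y'|\le|L_y|\,|L_y'/L_y|$ cannot see that $L_y'(\sigma)$ is governed by the possibly much smaller $|L_y(1+1/\log z,\tau_k*f)|$. That needs the additive input of Lemma \ref{lem:sieving tau_k*f}, which the paper inserts into $-L_y'(\sigma,\tau_k*f)=\sum_{P^-(n)>y}(\tau_k*f)(n)(\log n)n^{-\sigma}$:
\begin{itemize}
\item The error term contributes $O(\log y)$. It converges because $A>k+2$, which is where $k\le D-1$ enters, and it is the source of the additive $\log y$.
\item The main term is $\ll\sum_{j<k}|c_{k,j}(y,f)|(\log w)^{k-j+1}$.
\item By \eqref{eq:c_{k,j}(y,f)} and Lemma \ref{lem:L^{(j)}(1,f)}(b), $c_{k,j}(y,f)\ll|L_y(1+1/\log z,f)|(\log z)^j/(\log y)^k$.
\item Finally, $L_y(1+1/\log z,\tau_k)\asymp(\log z/\log y)^k$ and $(\log w)^2=(\sigma-1)^{-2}$ give the stated bound.
\end{itemize}
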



\begin{proof} (a) Using Lemma \ref{lem:sieving tau_k*f}, we have
	\[
	\sum_{\substack{n\le x \\ P^-(n)>y}} (\tau_m*f)(n) \ll_{A,D} \frac{x(\log y)^{A-m-1}}{(\log x)^{A-m}},
	\]
	because $c_{m,j}(y)=0$ for $j=0,1,\dots,m-1$. Hence, the claimed estimate on $L_y^{(j)}(\sigma,\tau_m*f)$ follows by partial summation.
	
	\medskip
	
	(b) Let $g=\tau_m*f$. If $\sigma\ge 1+1/\log y$, then we have $|L_y(\sigma,g)|\asymp_D 1$. Since $L_y(1,f)\ll1$ by part (a), the needed result follows. Finally, assume that $\sigma=1+1/\log z$ with $z\ge y$. For every $x\ge z$, applying Lemma \ref{lem:L(s,f)} three times with $g$ implies that
	\begin{align*}
	\log|L_y(1+1/\log x,g)|  
		&= \log |L_y(1+1/\log z,g)|+ \log |L_z(1+1/\log x,g)| + O(1)  \\
		&\le \log|L_y(1+1/\log z,g)|+O(1),
		\end{align*}
	where we used part (a). Letting $x\to\infty$ and exponentiating completes the proof of part (b). 
	
	\medskip
	
(c) Let $\sigma=1+1/\log w$, so that $w\in[y,z]$. Using Lemma \ref{lem:sieving tau_k*f} and partial summation, we have
	\[
	-L_y(\sigma,\tau_k*f) = \sum_{P^-(n)>y} \frac{(\tau_k*f)(n)\log n}{n^{1+1/\log w}} 
	= \int_y^\infty \sum_{j=0}^{k-1} c_{k,j}(y,f) \frac{(\log x)^{k-j}}{x^{1+1/\log w}} \dee x 
	+ O_{A,D}(\log y) . 
	\]
	Letting $x=w^t$, we find that
	$
	\int_y^\infty (\log x)^{k-j}/x^{1+1/\log w} \dee x\ll_D (\log w)^{k-j+1}$, and thus 
	\[
	L_y'(\sigma,\tau_k*f) \ll_{A,D}  \sum_{j=0}^{k-1} |c_{k,j}(y,f)| (\log w)^{k-j+1} + \log y .
	\]
	We use the bound $c_{k,j}(y,f)$ from Lemma \ref{lem:sieving tau_k*f} in conjunction with Lemma \ref{lem:L^{(j)}(1,f)} to find that
	\[
	c_{k,j}(y,f) \ll_{A,D} \sum_{i=0}^j \frac{|L_y^{(i)}(1,f)|}{(\log y)^{k-j+i}} 
		\ll_D \frac{|L_y(1+1/\log z,f)|(\log z)^j}{(\log y)^k} . 
	\]
	Putting together the above estimates implies that
	\[
	L_y'(\sigma,\tau_k*f) 
		\ll \frac{|L_y(1+1/\log z,f)|(\log z)^{k-1} (\log w)^2}{(\log y)^k} + \log y .
	\]
	To complete the proof, note that $L_y(1+1/\log z,\tau_k) \asymp(\log z/\log y)^k$.
\end{proof}



\section{Proof of Theorem \ref{thm:main theorem}}

We are finally ready to prove our main result. All implied constants might depend on $A$ and $D$. Throughout, we set 
\[
g=\tau_m*f.
\]
Moreover, we will write $Q_{m+j}=Y_j$ for $j=1,\dots,D-m$, where the $Y_j$s are to be defined. We also adopt the convention that $Y_0=\infty$ and $Y_{D+1-m}=Q$. 

For every $z\ge y\ge Q$, Lemma \ref{lem:tau_m*f}(a) $L_y(1+1/\log z,g)\ll1$. Together with Lemma \ref{lem:L(s,f)}, this implies that
\[
\sum_{y<p\le z} \frac{\Re(g(p))}{p} \le O(1) .
\]
If $m=D$, we have $\Re(g(p)) = \Re(f(p))+D\ge0$, and thus Theorem \ref{thm:main theorem} follows. 

Let us now assume that $m<D$. Let $y\ge Q$ and let $Y=\max\{y,y^{1/|L_y(1,g)|}\}$. For every $z\ge y$, we have that
\[
L_y(1+1/\log z,g) - L_y(1,g) = \int_1^{1+1/\log z} L_y'(\sigma,g) \dee\sigma 
	\ll \frac{\log y}{\log z}
\]
by Lemma \ref{lem:tau_m*f}. Thus, if $z\ge Y$, then $z\ge y^{1/|L_y(1,g)|}$ and thus  $L_y(1+1/\log z,g)\ll |L_y(1,g)|$. Combining this with Lemma \ref{lem:tau_m*f}(b), we conclude that $|L_y(1+1/\log z,g)|\asymp |L_y(1,g)|$ for all $z\ge Y$. Hence, if $u\ge v\ge Y$, then $|L_y(1+1/\log u,g)|\asymp|L_y(1+1/\log v,g)|$. We then use Lemma \ref{lem:L(s,f)} to conclude that
\begin{equation}
	\label{eq:g small for large p}
\sum_{u<p\le v} \frac{\Re(g(p))}{p} = O(1) \quad\text{for all}\ u\ge v\ge Y. 
\end{equation}

Motivated by the above estimate, we set
\[
Y_1 = \inf_{y\ge Q} y^{\max\{1,1/|L_y(1,g)|\}} .
\]
 we have $Q\le Y_1<\infty$, with the rightmost inequality following by our assumption that  $L(s,f)$ has a zero of multiplicity $m$ at $s=1$, which implies that $L(1,g)\neq0$. Moreover, we claim that
\begin{equation}
	\label{Ym-e2}
	|L_y(1+1/\log Y_1,1*g)|  \le O(1) \quad(Q\le y\le Y_1)  .
\end{equation}

To prove \eqref{Ym-e2}, let us first note that the definition of $Y_1$ and \eqref{eq:g small for large p} imply that
\begin{equation}
	\label{eq:Y1}
\sum_{u<p\le v} \frac{\Re(g(p))}{p} =O(1) \quad\text{for all}\ u\ge v\ge Y_1. 
\end{equation}
In addition, we have that
\begin{equation}
	\label{eq:Y_1 asymp}
	\log Y_1 \asymp \min_{y\ge Q} \frac{\log y}{|L_y(1,g)|} ,
\end{equation}
because $L_y(1,g)\ll 1$ for all $y\ge Q$ (see Lemma \ref{lem:tau_m*f}). Thus, if $y\in[Q,Y_1]$, then
\[
\frac{\log Y_1}{\log y} \cdot |L_y(1,g)| \ll1.
\]
On the other hand,
\[
|L_y(1,g)| \asymp |L_y(1+1/\log Y_1,g)| 
\]
by \eqref{eq:Y1} and Lemma \ref{lem:L(s,f)}. Putting the above estimates together proves \eqref{Ym-e2}.

We construct the other numbers $Y_2,Y_3,\dots,Y_{D-m}$ inductively: fix some $k\in\{1,\dots,D-m\}$ and assume that we have constructed $Y_k\le Y_{k-1}\le \cdots \le Y_1$ such that
\begin{equation}
\label{eq:Yk-e1}
\sum_{u<p\le v} \frac{j+\Re(g(p))}{p} = O(1) \quad(Y_{j+1}\le u<v<Y_j)
\end{equation}
for $j\in\{0,1,\dots,k-1\}$, and
\begin{equation}
	\label{eq:Yk-e2}
	|L_y(1+1/\log Y_k,\tau_k*g)|  \le O(1) \quad(Q\le y\le Y_k) .
\end{equation}
We then set 
\[
Z_{k+1}:= \min_{Q\le y\le Y_k} y^{\max\{1,1/|L_y(1+1/\log Y_k,\tau_k*g)|\}} 
\quad\text{and}\quad
Y_{k+1}=\begin{cases}
	Z_{k+1}  &\text{if}\ Z_{k+1}< Y_k,\\
	Y_k  &\text{if}\  Z_{k+1}\ge Y_k.
	\end{cases}
\]
We will prove that \eqref{eq:Yk-e1} holds with $j=k$, and that if $k<D-m$, then \eqref{eq:Yk-e2} holds with $k$ replaced by $k+1$. Before we begin, let us observe that  the definition of $Z_{k+1}$ and \eqref{eq:Yk-e2} imply that 
\begin{equation}
	\label{eq:Z_{k+1} asymp}
\log Z_{k+1}\asymp \min_{Q\le y\le Y_k} \frac{\log y}{|L_y(1+1/\log Y_k,\tau_k*g)|} . 
\end{equation}

\smallskip

Now, the inductive step is easy to establish when $Z_{k+1}\ge Y_k$. Indeed, in this case, $Y_{k+1}=Y_k$, so \eqref{eq:Yk-e1} is vacuous. Moreover, we have 
\[
\log Y_k\le \log Z_{k+1} \ll \frac{\log y}{|L_y(1+1/\log Y_k,\tau_k*f)|} \quad(Q\le y\le Y_k=Y_{k+1}) 
\]
by \eqref{eq:Z_{k+1} asymp}. Together with Lemma \ref{lem:L(s,f)}, this implies that
\[
|L_y(1+1/\log Y_{k+1},\tau_{k+1}*f)|\ll 1 \quad (Q\le y\le Q_{k+1}),
\]
which proves \eqref{eq:Yk-e2} with $k+1$ in place of $k$.

\smallskip 

Let us now assume that $Z_{k+1}<Y_k$, so that $Y_{k+1}=Z_{k+1}$. We start by proving \eqref{eq:Yk-e1} with $k$ replaced by $k+1$. 

\smallskip

First, we consider the case when $k=D-m$. We apply \eqref{eq:Yk-e2}, take logarithms and use Lemma \ref{lem:L(s,f)} to find that
\[
\sum_{y<p\le Y_{D-m}} \frac{D-m+\Re(g(p))}{p} \le O(1) \quad(Q\le y\le Y_{D-m}) .
\]
Since $D-m+\Re(g(p))=D+\Re(f(p))\ge0$ and $Y_{D-m+1}=Q$, relation \eqref{eq:Yk-e1} is proven with $j=D-m=k$. Note that we don't need to establish \eqref{eq:Yk-e2} with $k+1$ in this case (though the reader can check it is also true).

\smallskip

Next, we prove \eqref{eq:Yk-e1} when $j=k<D-m$. Recall \eqref{eq:Z_{k+1} asymp}. Hence, there exists  $y\in[Q,Y_k]$ such that 
\begin{equation}
	\label{eq:Y_{k+1} asymp}
\log Y_{k+1} \asymp \frac{\log y}{|L_y(1+1/\log Y_k,\tau_k*g)|} .
\end{equation}
For all $w\in[y,Y_k]$, we have
\[
	L_y(1+1/\log w,\tau_k*g) - L_y(1+1/\log Y_k,\tau_k*g)  
	= \int_{1+1/\log Y_k}^{1+1/\log w} L_y'(\sigma,\tau_k*g) \dee \sigma .
\]
Using Lemma \ref{lem:tau_m*f}(c) with $z=Y_k$ and with $k_{\text{Lemma}\ \ref{lem:tau_m*f}}=k+m$ , we find that
\begin{equation}
	\label{eq:rate of change}
\big|L_y(1+1/\log w,\tau_k*g) - L_y(1+1/\log Y_k,\tau_k*g) \big| 
\ll  |L_y(1+1/\log Y_k,\tau_k*g)| + \frac{\log y}{\log w} 
\end{equation}
If we further assume that $w\ge Y_{k+1}$, then $\log y/\log w\ll |L_y(1+1/\log Y_k,\tau_k*g)|$ by  \eqref{eq:Y_{k+1} asymp}. We thus conclude that
\[
L_y(1+1/\log w,\tau_k*g) \ll |L_y(1+1/\log Y_k,\tau_k*g)| \quad\text{for all}\ w\in[Y_{k+1},Y_k]. 
\]
Together with Lemma \ref{lem:L(s,f)}, this implies that 
\[
\sum_{w<p\le Y_k} \frac{k+\Re(g(p))}{p} \ge -O(1) \quad\text{for all}\ w\in[Y_{k+1},Y_k].
\]
On the other hand, \eqref{eq:Yk-e2} and Lemma \ref{lem:L(s,f)} imply that 
\[
\sum_{w<p\le Y_k} \frac{k+\Re(g(p))}{p} \le O(1) \quad\text{for all}\ w\in[Y_{k+1},Y_k].
\]
The two above estimates readily establish  \eqref{eq:Yk-e1} with $j=k$. 

Finally, it remains to prove that \eqref{eq:Yk-e2} holds with $k$ replaced by $k+1$. For each $y\in[Q,Y_{k+1}]$, the definition of $Y_{k+1}=Z_{k+1}$ and \eqref{eq:Z_{k+1} asymp} imply that
\[
\frac{\log Y_{k+1}}{\log y} \cdot |L_y(1+1/\log Y_k,\tau_k*g)| \ll 1.
\]
On the other hand,
\[
|L_y(1+1/\log Y_k,\tau_k*g)| \asymp |L_y(1+1/\log Y_{k+1},\tau_k*f)| 
\]
by Lemma \ref{lem:L(s,f)} and by \eqref{eq:Yk-e1} with $j=k$, which we have already established. Consequently, 
\[
|L_y(1+1/\log Y_{k+1},\tau_{k+1}*f)| \asymp \frac{\log Y_{k+1}}{\log y} \cdot |L_y(1+1/\log Y_{k+1}, \tau_k*f)| 
	 \ll 1 
\]
for $y\in[Q,Y_{k+1}]$. This completes the inductive step, thus establishing Theorem \ref{thm:main theorem}.

\section{Zeroes}

In this section, we prove Theorem \ref{thm:zeroes}. Recall that $\CF_D^{\mathrm{strong}}(Q)$ denotes the class of multiplicative functions $f:\N\to\C$ such that $|\Lambda_f|\le D\Lambda$ and 
\[
\bigg|\sum_{n\le x}f(n)\bigg| \le \frac{x^{1-1/\log Q}}{(\log x)^{D+1} } \quad(x\ge Q).
\]
First, we demonstrate the following result that is a relatively standard consequence of the Borel--Carath\'eodory theorem. 

\begin{lemma}
	\label{lem:BC}
	Let $f\in\CF_D^{\mathrm{strong}}(Q)$. 
	There exists a constant $c_0=c_0(D)\in(0,1/10]$ such that $L(s,f)$ has at most $D$ zeroes in the ball $B(1,c_0/\log Q)$ counted according to their multiplicity. Moreover,
		\[
		\frac{L'}{L}(s,f) = \sum_{\rho \in B(1,c_0/\log Q)} \frac{1}{s-\rho}  +O_D(\log Q) 
		\]
		for all $s\in B(1,c_0/(2\log Q))$, where the sum runs over all zeros $\rho\in B(1,c_0/\log Q)$ of $L(s,f)$ listed according to their multiplicity.
\end{lemma}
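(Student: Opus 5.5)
The approach is the standard Borel--Carath\'eodory argument applied to $L(s,f)$ on a disc of radius $\asymp 1/\log Q$ around a point slightly to the right of $1$. Set $\delta=1/\log Q$ and $s_0=1+\delta$.

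\emph{Analytic setup and upper bound.} By \eqref{eq:f-small strong} and partial summation, $L(s,f)$ is analytic in $\Re(s)>1-\delta$. Moreover, for $|s-1|\le \delta/2$ we have
\[
|L(s,f)|\le \sum_{n< Q}\frac{\tau_D(n)}{n^{1-\delta/2}} + \bigg|\int_Q^\infty x^{-s}\,\dee\Big(\sum_{n\le x}f(n)\Big)\bigg| .
\]
The first term is $\ll_D(\log Q)^D$, because $n^{\delta/2}\le e^{1/2}$ for $n<Q$. The second term is $\ll |s|\int_Q^\infty x^{-\Re s}\, x^{-\delta}(\log x)^{-D-1}\,x\,\dee x/x\ll 1$, since $x^{1-\Re(s)-\delta}\le x^{-\delta/2}$. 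Hence $\log|L(s,f)|\le D\log\log Q+O_D(1)$ on $B(1,\delta/2)$.

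\emph{Lower bound at the centre.} At $s_0$, the Euler product together with $|\Lambda_f|\le D\Lambda$ gives
\[
\log|L(s_0,f)|\ge -D\log\zeta(s_0)+O(1)=-D\log\log Q+O_D(1).
\]
Equivalently, $|\log L(s_0,f)|\le D\log\log Q+O(1)$, using the branch given by the Dirichlet series of $\Lambda_f/\log$.

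\emph{Jensen and Borel--Carath\'eodory.} Apply Jensen's formula on the disc $B(s_0,R)$ with $R=\tfrac{1}{2}\delta+\delta$, or after a rescaling that places a disc of radius comparable to $\delta$ around $s_0$ inside $B(1,\delta/2)$. I will use $B(s_0,\delta/2)$ with a slightly shifted centre so that $B(1,c_0\delta)$ lies well inside it. The number of zeros in a concentric disc of half the radius is then at most
\[
\frac{1}{\log 2}\Big(\max\log|L|-\log|L(s_0)|\Big)\le \frac{2D\log\log Q+O(1)}{\log 2}.
\]
This only gives $O(\log\log Q)$ zeros, not $D$, so the crude Jensen bound is insufficient. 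This is the main obstacle: the losses of $(\log Q)^{D}$ in the upper bound and $(\log Q)^{-D}$ at $s_0$ must be exploited more sharply.

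To handle it, I would first remove the trivial growth by considering
\[
F(s)=L(s,f)\,\zeta(s)^{D}(s-1)^{D}\cdot(\text{normalization}),
\]
or more precisely the ratio $L(s,f)/(\text{expected size})$. Alternatively, I would run Borel--Carath\'eodory on $G(s)=\log\big(L(s,f)\prod_\rho(s-\rho)^{-1}\big)$ in the usual Landau way. First, apply Jensen on radius $\asymp\delta$ to get $O(\log\log Q)$ zeros. Then write
\[
\frac{L'}{L}(s)=\sum_{|\rho-s_0|\le r}\frac{1}{s-\rho}+O(\log Q)
\]
on a smaller disc, via Borel--Carath\'eodory applied to $\log\big(L(s)/\prod(s-\rho)\big)$. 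The bound on $\Re$ of that logarithm comes from $\log|L|\le D\log\log Q$ on the big circle minus $\sum\log|s-\rho|\ge \#\rho\cdot\log(\delta/4)$. Since $\#\rho\ll\log\log Q$, this costs $O((\log\log Q)^2)$, and after differentiating and dividing by the radius it gives $O(\log Q\cdot(\log\log Q)^2)$. To avoid this loss, I would shrink the radius to $c\delta$ with $c$ depending on $D$. The key point is that $\Re\frac{L'}{L}(s_0+t\delta)\ge -D\frac{\zeta'}{\zeta}$-type bounds give $\sum_\rho\Re\frac{1}{s-\rho}\le D/(t\delta)+O(\log Q)$. This is the classical zero-counting trick: if $k$ zeros lie in $B(1,c_0\delta)$, evaluate at $s=1+t\delta$ with $t$ small but $t\gg c_0$. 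Each such zero contributes $\Re\frac{1}{s-\rho}\ge \frac{1}{(t+c_0)\delta}$, and the remaining zeros contribute nonnegatively after a standard positivity arrangement (zeros to the left of $s$). Comparing with
\[
-\Re\frac{L'}{L}(s,f)=\Re\sum_n\frac{\Lambda_f(n)}{n^s}\le D\,\Big(-\frac{\zeta'}{\zeta}\Big)(1+t\delta)=\frac{D}{t\delta}+O(1)
\]
yields $k\le D(1+c_0/t)+O(t)$. Choosing $t$ small and $c_0\ll t^2$ forces $k\le D$.

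Once the zero count is bounded by $D$, I would rerun Borel--Carath\'eodory on $\log\big(L(s,f)\prod_{\rho}(s-\rho)^{-1}\big)$ over $B(1,c_0\delta)$. The real part is now bounded by $O_D(\log\log Q)$ plus $D\log(1/(c_0\delta))$, and the lower bound at the centre is $-D\log\log Q$. Because these two logarithmic terms cancel up to $O_D(1)$ after normalization, differentiating on $B(1,c_0\delta/2)$ gives the $O_D(\log Q)$ error term in the formula for $\frac{L'}{L}$.
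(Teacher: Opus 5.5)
There is a genuine gap. It is the obstacle you identify yourself but never resolve. With only $\log|L(s,f)|\le D\log\log Q+O(1)$ on $B(1,\delta/2)$ and $\log|L(1+\delta,f)|\ge -D\log\log Q-O(1)$, every Jensen/Landau/Borel--Carath\'eodory step has $M=2D\log\log Q+O(1)$. The partial-fraction formula therefore comes with an error of order $\log Q\cdot\log\log Q$, not $O(\log Q)$.

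Your zero-counting step at $s=1+t\delta$ silently uses an $O(\log Q)$ error, which is the thing being proved. With the error you actually have, it gives only $k\le D(1+c_0/t)+O(t\log\log Q)$, so $t$, and hence $c_0$, must depend on $Q$. Shrinking the radius to $c\delta$ cannot help, since the error is $\asymp M/r$ and $M$ does not shrink. Nor can you enlarge the radius, because $L(s,f)$ is only known to be analytic for $\Re(s)>1-1/\log Q$.

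Your closing claim that the logarithmic terms ``cancel up to $O_D(1)$'' is also false. Dividing by $\prod(s-\rho)$ over at most $D$ zeros at distance $\asymp\delta$ raises $\Re G$ by about $D\log\log Q$ both on the circle and at the centre. So the available bound for $\max\Re G-\Re G(\mathrm{centre})$ is still $2D\log\log Q+O_D(1)$. Multiplying by $\zeta(s)^D(s-1)^D\asymp 1$ changes nothing either.

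The missing idea is to split off the primes $\le Q$. Write $L(s,f)=E_Q(s)\,L_Q(s,f)$, where $E_Q$ is the product of the Euler factors at $p\le Q$. For $\Re(s)\ge 1-\delta$ the factor $E_Q$ is zero-free and contributes only $O(\sum_{p\le Q}\log p/p^{1-\delta})=O(\log Q)$ to $L'/L$. Its size near $s=1$ can a priori be anywhere between $(\log Q)^{-D}$ and $(\log Q)^{D}$, and this is exactly what your crude upper and lower bounds lose.

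Removing it requires the bound $L_Q(s,f)\ll_D 1$ for the sifted series $L_Q(s,f)=\sum_{P^-(n)>Q}f(n)n^{-s}$, and this is where the sieve input enters. The paper takes $\delta=\alpha/\log Q$, with $\alpha=\alpha(D)$ from Lemma \ref{lem:sifted-f}. That lemma plus partial summation gives $L_Q(s,f)\ll_D 1$ for $\Re(s)\ge 1-\delta$, $|\Im(s)|\le 1$. The Euler product gives $|L_Q(1+\delta/100,f)|\gg_D 1$. Borel--Carath\'eodory then runs with $M\asymp_D 1$ on a disc of radius $\asymp\delta$. This produces $O_D(1)$ nearby zeros and $\frac{L'}{L}(s,f)=\sum_\rho\frac{1}{s-\rho}+O_D(1/\delta)$.

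Only then does your positivity argument give at most $D$ zeros with $c_0=c_0(D)$. That argument is the same as the paper's, with $s_1=1+\delta/M$ and $c_0=\alpha/M^2$. Discarding the $O_D(1)$ zeros outside $B(1,c_0/\log Q)$ then costs $O_D(\log Q)$.
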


\begin{proof} As usually, we write $s=\sigma+it$. Let also $\alpha$ be as in Lemma \ref{lem:sifted-f}, and let $\delta=\alpha/\log Q$. By Lemma \ref{lem:sifted-f} and partial summation, we have that
	\[
	L_Q(s,f)\ll_D 1     \quad (\sigma\ge 1-\delta,\ |t|\le 1). 
	\]
Also, $L_Q(1+\delta/100,f)\gg_D1$. We then apply the Borel-Carath\'eodory theorem in the form of Lemma 8.6 in \cite{dk-book} with $R=\delta/4$, $M\asymp_D 1$ and $g(z)=L_Q(z+1+\delta/100,f)$ to find that
\[
\frac{L_Q'}{L_Q}(s,f) = \sum_{\rho \in B(1+\delta/100, \delta/2)}  \frac{1}{s-\rho} + O_D(1/\delta) .
\]
for all $s\in B(1+\delta/100,\delta/4)$, where the summation runs over all zeroes $\rho$ of $L(s,f)$ in the ball $B(1+\delta/100,\delta/2)$ according to their multiplicity. Note also that $\sum_{p\le Q}\sum_{k\ge1} \frac{\log p}{p^{k(1-\delta)}} \ll_D 1/\delta $. Thus
\begin{equation}
	\label{eq:BC}
	\frac{L'}{L}(s,f) = \sum_{\rho \in B(1+\delta/100, \delta/2)}  \frac{1}{s-\rho} + O_D(1/\delta)  
\end{equation}
for all $s\in B(1+\delta/100,\delta/4)$.

Now, let $M\ge100$ be a constant to be chosen later and let $s_1=1+\delta/M$. 
If $\rho\in B(1,\delta/M^2)$, then
\begin{equation}
	\label{eq:rho contribution}
	\Re\bigg(\frac{1}{s_1-\rho}\bigg) = \frac{s_1-\beta}{|s_1-\rho|^2} = \frac{M+O(1)}{\delta} .
\end{equation}
For all other zeroes $\rho\in B(1+\delta/100,\delta/2)$, we note that $\Re(1/(s_2-\rho))\ge0$.  Consequently,
\begin{equation}
	\label{eq:rho total ower bound}
\Re\sum_{\rho\in B(1+\delta/100,\delta)} \frac{1}{s-\rho}\ge  \#\big\{\rho\in  B(1,\delta/M^2)    \big\} \cdot \frac{M+O(1)}{\delta} .
\end{equation}
On the other hand, since $|\Lambda_f|\le D\Lambda$, we have 
\begin{equation}
	\label{eq:L'/L bound}
	\bigg|\frac{L'}{L}(s_1,f)\bigg| \le -D \frac{\zeta'}{\zeta}(s_1) = \frac{DM}{\delta}+O_D(1) .
\end{equation}
Inserting \eqref{eq:rho total ower bound} and \eqref{eq:L'/L bound} into \eqref{eq:BC} implies that 
\[
\#\big\{\rho\in  B(1,\delta/M^2)   \big\} \cdot \frac{M+O(1)}{\delta}   \le \frac{DM}{\delta} + O_D(1/\delta) .
\]
Taking $M$ large enough and $c_0=\alpha/M^2$ proves the first part of the lemma. 

To prove the second part, we note that \eqref{eq:BC} readily yields that
\[
\frac{L'}{L}(s,f) = \sum_{\rho \in B(1,c_0/\log Q)}  \frac{1}{s-\rho} + O_D\bigg(\frac{1+\#\big\{\rho\in B(1+\delta/100,\delta/2)\setminus B(1,c_0/\log Q)\big\} }{\delta}\bigg)  
\]
for all $s\in B(1,c_0/(2\log Q))$, because $|s-\rho|\ge c_0/(2\log Q)\asymp_D\delta$ for all such $s$ and for all $\rho$ such that $|\rho-1|>c_0/\log Q$. 
To complete the proof, it suffices to show that 
\begin{equation}
	\label{eq:zeros crude bound}
	\#\big\{\rho \in B(1+\delta/100,\delta/2)\big\} \ll_D 1. 
\end{equation}
This follows by a variation of the argument leading to our bound for the number of zeroes in $B(1,\delta/M^2)$. Indeed, if we let $s_2=1+\delta/4$, then the equality in \eqref{eq:rho contribution} implies that 
\[
\Re\bigg(\frac{1}{s_2-\rho}\bigg) \ge \frac{4}{9\delta} 
\]
for all $\rho \in B(1+\delta/100,\delta/2)$, because $s_2-\beta\ge s_2-1= \delta/4$ and $|s_2-\rho|\le |s_2-1-\delta/100|+|1+\delta/100-\rho|\le 3\delta/4$. Moreover, arguing as in \eqref{eq:L'/L bound}, we find that $(L'/L)(s_2,f)\ll_D1/\delta$. Combining these inequalities with \eqref{eq:BC} demonstrates \eqref{eq:zeros crude bound}, this completing the proof of the lemma.
\end{proof}

\begin{proof}[Proof of Theorem \ref{thm:zeroes}]
	The first claim in the statement of the theorem follows immediately from Lemma \ref{lem:BC}. Now, let $m$ and $\rho_m,\rho_{m+1},\dots,\rho_d,\rho_{d+1}$ be as in the statement of Theorem \ref{thm:zeroes}. Fix $j\in\{m,m+1,\dots,d\}$ and let $z\ge y\ge Q^{2/c_0}$ so that
	\begin{equation}
		\label{eq:y and z in terms of zeroes}
	\frac{2}{|\rho_{j+1}-1|} \le \log y\le \log z<\frac{1}{2|\rho_j-1|}. 
		\end{equation}
To complete the proof of Theorem \ref{thm:zeroes}, it suffices to show that 	
		\begin{equation}
			\label{eq:thm zeroes: goal}
				 \sum_{y<p\le z} \frac{\Re(f(p))+j}{p} = O_D(1)
		\end{equation}
		uniformly over all such choices of $y$ and $z$. 
		
		Indeed, using Lemma \ref{lem:L(s,f)} twice, we have that
\begin{align*}
\sum_{y<p\le z} \frac{\Re(f(p))}{p} 
	&= \log|L(1+1/\log z,f)| - \log|L(1+1/\log y,f)| + O_D(1) \\
	&= - \Re \int_{1+1/\log z}^{1+1/\log y} \frac{L'}{L}(s,f) \dee s + O_D(1). 
\end{align*}
We estimate the integrant using Lemma \ref{lem:BC} to deduce that
\begin{align*}
	\sum_{y<p\le z} \frac{\Re(f(p))}{p} 
	&= - \sum_{\rho\in B(1,c_0/\log Q)} \Re \int_{1+1/\log z}^{1+1/\log y} \frac{\dee s}{s-\rho} +O_D(1) \\
	&=- \sum_{\rho\in B(1,c_0/\log Q)}  \log\bigg|\frac{\frac{1}{\log y}-(\rho-1)}{\frac{1}{\log z}-(\rho-1)} \bigg| + O_D(1).
\end{align*}
There are $m$ summands with $\rho=1$. Moreover, there are $j-m$ summands with $0<|\rho-1|\le |\rho_j-1|$. Since $|\rho_j-1|\le 1/(2\log z)$ by \eqref{eq:y and z in terms of zeroes},  we have that $|1/\log y - (\rho-1)|\asymp 1/\log y$ and $|1/\log z-(\rho-1)|\asymp1/\log z$ for all such $\rho$, so that
\[
\bigg|\frac{\frac{1}{\log y}-\rho}{\frac{1}{\log z}-\rho} \bigg| \asymp \frac{\log z}{\log y} .
\]
For all remaining zeroes $\rho$, we have $|\rho-1|\ge |\rho_{j+1}-1|\ge 2/\log y$ by \eqref{eq:y and z in terms of zeroes}, and thus $|1/\log y - (\rho-1)|\asymp |\rho-1|$ and $|1/\log z-(\rho-1)|\asymp|\rho-1|$, so that
\[
\bigg|\frac{\frac{1}{\log y}-\rho}{\frac{1}{\log z}-\rho} \bigg| \asymp 1 .
\]
Putting together the above estimates completes the proof of Theorem \ref{thm:zeroes}.
\end{proof}

\subsection*{Acknowledgments}

The author would like to thank Andrew Granville, Youness Lamzouri and Jesse Thorner for their comments on an earlier version of the paper.

The author gratefully acknowledges support by the Courtois Chair II in fundamental research, by the Natural Sciences and Engineering Research Council of Canada (RGPIN-2024-05850), by the Fonds de recherche du Qu\'ebec - Nature et technologies (2025-PR-345672), and by the program {\it Simons Fellows in Mathematics} of the Simons Foundation.

\bibliographystyle{alpha}

\end{document}